\newtheorem{thm}{Theorem}[section]
\newtheorem{lem}[thm]{Lemma}
\newtheorem{prop}[thm]{Proposition}
\newtheorem{rmk}{Remark}[section]
\def\f{\frac}
\def\l{\lambda}
\def\a{\alpha}
\def\gm{\gamma}
\def\r{\rho }
\def\p{\partial}
\def\b{\bar}
\def\e{\eta}
\def\h{\phi}
\def\s{\psi}
\def\b{\bar}
\def\v{\varepsilon}
\def\wt{\widetilde}
\def\be{\begin{equation}}
\def\ee{\end{equation}}
\def\bs{\begin{split}}
\def\es{\end{split}}	
\def\mb{\mathbb}
\def\mc{\mathcal}
\def\bma#1\ema{{\allowdisplaybreaks\begin{split}#1\end{split}}}
\numberwithin{equation}{section}
\begin{document}
\title{Existence and Nonlinear Stability of  Stationary Solutions to the Full Two-Phase Flow Model in a Half Line }
\author[a,b]{Hai-Liang  Li  \thanks{
E-mail:		hailiang.li.math@gmail.com (H.-L. Li)}} 

\author[a,b]{Shuang Zhao \thanks{E-mail: shuangzhaomath@163.com(S. Zhao)}}
    \affil[a]{School of Mathematical Sciences,
	Capital Normal University, Beijing 100048, P.R. China}
\affil[b]{Academy for Multidisciplinary Studies, Capital Normal University, Beijing 100048, P.R. China}
\date{}
\renewcommand*{\Affilfont}{\small\it}	
\maketitle
\begin{abstract}The inflow  problem for the full two-phase model in a half line is investigated in this paper. 
 The existence and uniqueness of the stationary solution is shown by  applications of center manifold theory, and its nonlinear stablility of the stationary solution is established for the small perturbation.
\end{abstract}
\noindent{\textbf{Key words.} 
	Full two-phase flow  model, stationary solution, inflow problem, nonlinear stability.}
\section{Introduction} 
Two-phase flow models play  important roles in applied scientific areas, for instance,  nuclear, engines, chemical engineering, medicine, oil-gas,  fluidization,   waste water treatment, liquid crystals,   lubrication, biomedical flows \cite{IM,MV,HL}, etc. 
In this paper, we consider the full two-phase flow model  which  
can be  formally obtained from a Vlasov-Fokker-Planck equation coupled with the compressible Navier-Stokes equations through the  Chapman-Enskog expansion \cite{LWW}.
  
We consider the initial-boundary value problem (IBVP) for the  two-fluid model as follows:
\begin{equation}
\left\{
\begin{aligned}
&                  
\rho_{t}+(\rho u)_{x}=0,
\\&
(\rho u)_{t} +[\rho u^{2} + p_{1}(\rho)  ]_{x}      =(\mu  u_{x})_{x}  +n(v-u)
\\&
n_{t}+(n v)_{x}=0,
\\&
(n v)_{t} +[n v^{2} + p_{2}(n)  ]_{x}      =(n v_{x})_{x}  -n(v-u)
, ~~~x>0,~t>0,
\label{f}  
\end{aligned}   
\right . 
\end{equation}  
where $\rho>0$ and $n>0$ stand for the densities, $u$ and $v$ are the velocities of two fluids, and the constant $\mu$ is the viscosity coefficient. The pressure-densities functions take forms
\be 
p_{1}(\rho)=A_{1}\rho^{\gamma}, \quad p_{2}(n)=A_{2}n^{\alpha}\ee
 with  four constants $A_{1}>0 ,~ A_{2}>0 $, $ \gamma \geq  1$ and $\alpha\geq 1$.   
 The initial data satisfy
 \begin{equation}
 ( \rho,  u, n, v)(0,  x)=  ( \rho_{0}, u_{0}, n_{0}, v_{0})(x),      ~~~~\underset{x\in \mathbb{R}_{+}}{\inf}   \rho _{0}(x) > 0,    ~~~~\underset{x\in \mathbb{R}_{+}}{\inf}   n _{0}(x)  > 0 ,
 \label{initial d}
 \end{equation}
 \begin{equation}
 \begin{split}
 &
 \lim_{x\to+\infty} ( \rho_{0}, u_{0}, n_{0}, v_{0})(x)        =(\rho_{+},u_{+}, n_{+}, u_{+}), ~~~~~\rho_{+}> 0,~~~~~n _{+}> 0 , ~~~~~~~~      
 \label{initial c}
 \end{split}
 \end{equation}
  and inflow boundary condition imposed with
  \begin{equation}
  (\r,u,n,v)(t,0)=(\r_{-},u_{-},n_{-},u_{-})
  \label{inflow c}
  \end{equation}
  where $\rho_{+}>0$, $n_{+}>0$ and $u_{-}>0$ are constants.
  
  
 The main purpose of this paper is to prove the  existence and nonlinear  stability of the stationary solution in Sobolev space. The stationary solution
$(\widetilde{\rho },\widetilde{u},\wt{n},\wt{v})(x)$ corresponding to the problem $(\ref{f})$-$(\ref{inflow c})$ 
satisfies the following system 
\begin{equation}
\left\lbrace 
\begin{aligned}
&
(  \widetilde{\rho  }   \widetilde{u}   )_{x}  =0,
\\
&
[\widetilde{\rho}  \widetilde{u}^{2}+p_{1}(\widetilde{\rho})]_{x}
=(\mu \widetilde{u}_{x})_{x}   +\widetilde{n}( \widetilde{v}- \widetilde{u}),
\\
&
(  \widetilde{n  }   \widetilde{v}   )_{x}  =0,
\\
&
[\widetilde{n}  \widetilde{v}^{2}+p_{2}(\widetilde{n})]_{x}
=(\widetilde{n}\widetilde{v}_{x})_{x}   -\widetilde{n}( \widetilde{v}- \widetilde{u}),
 ~~~x>0,~
\end{aligned}
\right. 
\label{stationary f}
\end{equation}
and the boundary condition and spatial far field condition
\begin{equation}
\begin{split} 
&
(\wt{\r},\wt{u},\wt{n},\wt{v})(0)=(\r_{-},u_{-},n_{-},v_{-}),
~~~~\lim_{x \to \infty}
(\widetilde{\rho },\widetilde{u},\widetilde{n },\widetilde{v })(x)
=(\rho_{+},u_{+},n_{+},u_{+}),
\end{split} 
\end{equation}
\begin{equation}
\begin{split} 
~~~~\underset{x\in \mathbb{R}_{+}}  {\inf }\widetilde{n}(x)   >0,
~~~~\underset{x\in \mathbb{R}_{+}}  {\inf}  \widetilde{\rho}(x)>0.
\label{stationary boundary c}
\end{split} 
\end{equation}
 Integrating
 $ (\ref{stationary f})_{1}$,
 $ (\ref{stationary f})_{3}$ 
 over
  $(x, +\infty)  $ and $(0,x)$, 
 we obtain
 \begin{equation}
 \begin{split}
 &
\wt{u}=\f{\r_{+}u_{+}}{\wt{\r}}=\f{\r_{-}u_{-}}{\wt{\r}},\quad \wt{v}=\f{n_{+}u_{+}}{\wt{n}}=\f{n_{-}u_{-}}{\wt{n}}
\label{wt r u}
\end{split} 
 \end{equation}
 which implies that the relationships
 \begin{equation}
 u_{+} =\frac{\r_{-}}{\rho_{+}} u_{-} =\f{n_{-}}{n_{+}} u_{-}>0, \quad
  \f{u_{+}}{u_{-}} =\f{\r_{-}}{\r_{+}} =\f{n_{-}}{n_{+}}
 \label{u_{+}}
 \end{equation}
 are  necessary for the existence of  stationary solutions to the boundary value  problem (BVP)
 $ (\ref{stationary f})$-$ (\ref{stationary boundary c}) $.\\
 Define the Mach number $M_{+}$ and sound speed $c_{+}$ at the spatial far field as follows
\be M_{+}:=\f{|u_{+}|}{c_{+}},\quad c_{+}:=(\f{A_{1}\gm \r_{+}^{\gm}+A_{2}\a n_{+}^{\a}}{\r_{+}+n_{+}})^{\f{1}{2}}. \ee
$~~~~$Then, we have the  following results about the existence and uniqueness  of the stationary solution.
\begin{thm}
	\label{thm stationary s}
	Assume that   $\delta:=|u_{+}-u_{-}|>0$,  $ u_{+}>0,
	\f{u_{+}}{u_{-}}= \f{\r_{-}}{\r_{+}}=\f{n_{-}}{n_{+}}$ hold.  Then there exists  a set $\Omega_{+}\subset \mb{R}_{+}$  such that if $u_{-} \in \Omega_{+}$ and 
	$\delta$ sufficiently small,   there exists a unique strong solution $(\widetilde{\rho },\widetilde{u},\widetilde{n},\widetilde{v})$ to the 
	problem $(\ref{stationary f})$-$(\ref{stationary boundary c})$  which satisfies either for the supersonic or subsonic case $M_{+}\neq 1$ that
	\begin{equation}
	~~~~~~|   \partial_{x}^{k} (\widetilde{\rho}-\rho_{+},\widetilde{u}-u_{+},\widetilde{n}-n_{+},\widetilde{v}-u_{+})  |   \leq C \delta e^{-m x} ,~~
	~k=0,1,2,
	\label{M_{+}>1 stationary solution d}
	\end{equation}
	or for the sonic case $M_{+}=1$ that    $\wt{u}_{x}\geq 0$, $\wt{v}_{x}\geq 0$ and
	\begin{equation}
	|\partial_{x}^{k}  (\widetilde{\rho}-\rho_{+},\widetilde{u}-u_{+},\widetilde{n}-n_{+},\widetilde{v}-u_{+}) |  \leq  C \frac{\delta ^{k+1}}{(1+\delta x)^{k+1}},~k=0,1,2, \label{sigma}\end{equation}
	where $C>0$, $m>0$ are positive constants.  
\end{thm}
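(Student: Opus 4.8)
The plan is to recast the boundary value problem $(\ref{stationary f})$--$(\ref{stationary boundary c})$ as the search for an orbit of an autonomous ODE issuing from a prescribed boundary point and converging to the far-field state, and then to control that orbit by invariant manifold theory. The first step is the reduction. By $(\ref{wt r u})$ the mass fluxes $j_1:=\r_+u_+$ and $j_2:=n_+u_+$ are constant along any solution, so $\wt\r=j_1/\wt u$, $\wt n=j_2/\wt v$; moreover, adding $(\ref{stationary f})_2$ and $(\ref{stationary f})_4$ makes the drag terms cancel, giving a further first integral $j_1\wt u+j_2\wt v+p_1(\wt\r)+p_2(\wt n)-\mu\wt u_x-\wt n\wt v_x\equiv C$, with $C$ fixed by the condition at $x=+\infty$. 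Eliminating $\wt\r,\wt n$ and $\wt v_x$ by these relations and using $\wt\r\wt u^2=j_1\wt u$, $(p_1(\wt\r))_x=-\gm p_1(\wt\r)\wt u_x/\wt u$ (and the analogue for the second phase), the system $(\ref{stationary f})$ collapses to a first-order autonomous system $\f{d}{dx}\Phi=F(\Phi)$ for $\Phi:=(\wt u,\wt v,\wt u_x)$ on an open subset of $\mb R^3$ where $\wt u,\wt v>0$, with the far-field state $P_+:=(u_+,u_+,0)$ an equilibrium, $F(P_+)=0$. The BVP is then equivalent to: find a forward orbit of $F$ starting at a point with $\wt u(0)=\wt v(0)=u_-$ — which, by $(\ref{u_{+}})$, automatically forces $\wt\r(0)=\r_-$ and $\wt n(0)=n_-$ — and with $\Phi(x)\to P_+$ as $x\to+\infty$.

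Second, I would linearize at $P_+$ and study the spectrum of $A:=DF(P_+)$. A direct computation of the characteristic polynomial — the drag makes the relative-velocity direction contribute an eigenvalue with nonzero real part, while the acoustic block carries a factor proportional to $1-c_+^2/u_+^2$ — shows that $\det A$ vanishes precisely when $u_+^2=c_+^2$, i.e.\ $M_+=1$, so for $M_+\neq1$ the equilibrium is hyperbolic. I would then \emph{define} $\Omega_+$ to be the set of admissible inflow speeds $u_-$ for which the relevant invariant manifold of $P_+$ — the stable manifold if $M_+\neq1$, the center-stable manifold if $M_+=1$ — has dimension matching the two boundary conditions $\wt u(0)=\wt v(0)=u_-$ and projects diffeomorphically onto the $(\wt u,\wt v)$-plane near $P_+$; the precise description of $\Omega_+$, and with it the supersonic/subsonic dichotomy in the statement, is then read off from the location of $\sigma(A)$ relative to the imaginary axis in the two regimes.

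Third, for $M_+\neq1$ the stable manifold theorem provides a $C^2$ local stable manifold $W^s$ through $P_+$, tangent to the stable subspace of $A$, on which every trajectory satisfies $|\partial_x^k(\Phi(x)-P_+)|\le Ce^{-mx}$ for $k=0,1,2$ and any $m>0$ below the spectral gap $\min\{|\mathrm{Re}\,\l|:\l\in\sigma(A),\ \mathrm{Re}\,\l<0\}$ — the bounds for $k=1,2$ by differentiating $\f{d}{dx}\Phi=F(\Phi)$. Since $|u_+-u_-|=\delta$ is small, $(\wt u,\wt v)(0)=(u_-,u_-)$ lies in the chart of $W^s$, and since $W^s$ projects diffeomorphically onto the $(\wt u,\wt v)$-plane (this defines $\Omega_+$), there is a unique point of $W^s$ with that value; its forward orbit is the desired strong solution, unique by uniqueness of $W^s$ and of ODE solutions. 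Translating the exponential bound back through $\wt\r=j_1/\wt u$, $\wt n=j_2/\wt v$, and the first integral gives $(\ref{M_{+}>1 stationary solution d})$.

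Finally, the sonic case $M_+=1$ is the main obstacle. Here $A$ has a simple zero eigenvalue, and center manifold theory yields a $C^2$ one-dimensional center manifold $W^c$ and a center-stable manifold $W^{cs}$ through $P_+$. Parametrizing $W^c$ by a coordinate $w$ (scaled so that the boundary data correspond to $w(0)\sim\delta$), the flow on $W^c$ has the normal form $w_x=a\,w^2+O(w^3)$, and the crux of the whole theorem is to evaluate the quadratic coefficient $a$ and to verify $a\neq0$ with the sign that makes $P_+$ attract the orbit from the side $w>0$ — this relies on the genuine convexity $p_i''>0$, i.e.\ $\gm,\a\ge1$, exactly as in the single-phase inflow boundary-layer problem, and it simultaneously forces the monotonicity $\wt u_x\ge0$, $\wt v_x\ge0$. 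Integration then gives $|w(x)|\le C\delta/(1+\delta x)$, and differentiating the reduced equation gives $|\partial_x^kw|\le C\delta^{k+1}/(1+\delta x)^{k+1}$ for $k=0,1,2$; reading the remaining components off the graphs of $W^c$ and of the first integral yields $(\ref{sigma})$. Two further points must be handled with care: that the boundary point actually projects onto $W^{cs}$ and that its orbit is asymptotic to $W^c$ rather than to the exceptional stable fiber (which would give exponential, not algebraic, decay), and the matching of $(\wt u,\wt v)(0)=(u_-,u_-)$ through the projection of $W^{cs}$ — both again controlled by the definition of $\Omega_+$. Establishing the sign of $a$ and this transversality in the coupled two-phase setting, where $A$ is genuinely $3\times3$ rather than scalar, is where the real work lies.
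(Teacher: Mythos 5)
Your proposal follows essentially the same route as the paper: the same reduction via the constant mass fluxes and the summed momentum first integral to a $3\times 3$ autonomous system for $(\wt u,\wt u_x,\wt v)$, the same spectral trichotomy at the equilibrium $(u_+,0,u_+)$ governed by $M_+$, the stable manifold theorem for $M_+\neq 1$, and the center-manifold normal form $w_x=aw^2+O(w^3)$ with $a>0$ (forcing $\wt u_x,\wt v_x\ge 0$ and the algebraic rates) for $M_+=1$.

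One concrete point where your formulation of $\Omega_+$ breaks down: for $M_+>1$ the paper's eigenvalue count is ${\rm Re}\,\l_1>0$, ${\rm Re}\,\l_2>0$, $\l_3<0$, so the local stable manifold is \emph{one}-dimensional and cannot project diffeomorphically onto the two-dimensional $(\wt u,\wt v)$-plane; under your definition $\Omega_+$ would be empty in precisely the supersonic regime that the theorem (and the paper's Remark 1.1) emphasizes. The two boundary conditions $\wt u(0)=\wt v(0)=u_-$ then genuinely overdetermine an orbit on a curve, and membership of the boundary data in $W^s$ must be imposed as a constraint --- this is what the paper's (admittedly implicit) $\Omega_+$ encodes. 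Relatedly, the paper needs the preliminary a priori bound $|\wt u_x(0)|+|\wt v_x(0)|\le C\delta$ (its Lemma 2.1, obtained by multiplying the reduced stationary equations by $\wt u$, $\wt u^{b}$, $\wt v_x/\wt v$ and integrating) to place the full boundary point $(u_--u_+,\wt u_x(0),u_--u_+)$ inside the small neighborhood where the local invariant manifolds exist; in your scheme this step is absorbed into the diffeomorphic-projection hypothesis, which, as noted, is unavailable in the supersonic case and so must be replaced by an explicit estimate of the free boundary derivative.
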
 
\begin{rmk}
	In contrast to isentropic Navier-Stokes equations {\rm \cite{MN in,HMS in}} and full compressible Navier-Stokes equations{\rm \cite{WQ in,NN in}}, where there is no stationary solutions for the supersonic case, there exist stationary solutions for supersonic, subsonic and sonic cases to the IBVP $(\ref{f})$-$(\ref{inflow c})$. 
\end{rmk}
Then, we   have   the  nonlinear stability of the stationary solution for $(\ref{f})$-$(\ref{inflow c})$ as follows. 
\begin{thm}\label{thm long time behavior}Let  the same conditions in Theorem $\ref{thm stationary s}$ hold and assume that it holds 
	$$|p{'}_{1}({\r_{+}})-p{'}_{2}({n_{+}})|	\leq  \sqrt{2}u_{+}\min\{\ (1+\f{\r_{+}}{n_{+}})[(\gm-1)p{'}_{1}(\r_{+})]^{\f{1}{2}},(1+\f{n_{+}}{\r_{+}})[(\a-1)p{'}_{2}(n_{+})]^{\f{1}{2}} \}$$
	 for the sonic case $M_{+}=1$.
	Then, there exists a positive constant $\varepsilon_{0}$ such that 
	\be \| (\rho_{0}-\widetilde{\rho}, u_{0}-\widetilde{u}, n_{0}-\widetilde{n},v_{0}-\widetilde{ v}) \|_{H^{1}}+\delta\leq \varepsilon_{0},
	\nonumber\ee
	the  problem $(\ref{f})$-$(\ref{inflow c})$ has a unique global solution $(\rho,u, n, v)(t,x)$ satisfying
	  \begin{equation}
	\left\lbrace 
	\begin{split}&
	(\rho-\widetilde{\rho},u-\widetilde{u},n-\widetilde{n},v-\widetilde{v}   )\in C([0,+\infty);H^{1}),\\&
	(\rho-\widetilde{\rho},n-\widetilde{n} )_{x}\in L^{2}([0,+\infty);L^{2}) ,\\&
	(u-\widetilde{u},v-\widetilde{v} )_{x} \in L^{2}([0,+\infty);H^{1}),
	\nonumber
	\end{split}
	\right. 
	\end{equation}
	and 
	\begin{equation}
	\lim_{t \to +\infty } \sup_{x\in \mathbb{R_{+}}}|(\rho,u,n,v)(t,x)-(\widetilde{\rho},\widetilde{u},\widetilde{n},\widetilde{v})(x)|=0.
	\end{equation}
\end{thm}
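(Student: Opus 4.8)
The plan is to prove Theorem \ref{thm long time behavior} by the classical energy method combined with a continuation argument, treating the perturbation
\[
(\varphi,\psi,\chi,\omega)(t,x) := (\rho-\widetilde{\rho},\,u-\widetilde{u},\,n-\widetilde{n},\,v-\widetilde{v})(t,x).
\]
First I would derive the system satisfied by $(\varphi,\psi,\chi,\omega)$ by subtracting the stationary system $(\ref{stationary f})$ from $(\ref{f})$; after using the mass equations to rewrite things in a convenient conservative/nonconservative hybrid form, this yields a quasilinear system of two transport-type equations (for $\varphi,\chi$) coupled to two viscous equations (for $\psi,\omega$), with a damping term $+n(v-u)$, $-n(v-u)$ that becomes, to leading order, $-(n_+ +\cdots)(\psi-\omega)$ — this relative-velocity damping is the structural feature that makes all three Mach regimes tractable, and I would isolate it early. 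The boundary conditions become $(\varphi,\psi,\chi,\omega)(t,0)=0$ because of the inflow condition $(\ref{inflow c})$ together with $(\ref{wt r u})$, and the initial data are small in $H^1$. Local existence in $C([0,T];H^1)$ follows from a standard iteration/fixed-point argument for symmetrizable hyperbolic–parabolic systems, so the whole game is the a priori estimate
\[
\sup_{0\le t\le T}\|(\varphi,\psi,\chi,\omega)(t)\|_{H^1}^2 + \int_0^T\Big(\|(\varphi_x,\chi_x)\|_{L^2}^2+\|(\psi,\omega)_x\|_{H^1}^2+\|\psi-\omega\|_{H^1}^2\Big)\,d\tau \le C\big(\|(\varphi,\psi,\chi,\omega)(0)\|_{H^1}^2+\delta\cdot(\cdots)\big),
\]
valid as long as $\|(\varphi,\psi,\chi,\omega)(t)\|_{H^1}+\delta$ stays below a fixed threshold; combining this with local existence and continuous induction closes the global existence, and the large-time decay to $0$ in $L^\infty$ then follows from the uniform $H^1$ bound plus the integrability of the dissipation in time (a Sobolev-interpolation argument showing $\frac{d}{dt}\|(\varphi_x,\chi_x,\psi_x,\omega_x)\|_{L^2}^2\in L^1$, hence $\|(\varphi,\psi,\chi,\omega)_x\|_{L^2}\to 0$, and then $\|(\varphi,\psi,\chi,\omega)\|_{L^\infty}^2\le C\|(\cdot)\|_{L^2}\|(\cdot)_x\|_{L^2}\to0$, using that the $L^2$ norm itself is controlled — for the zeroth-order part one uses that $\varphi,\chi$ at $x=0$ vanish and the dissipation controls $\varphi_x,\chi_x$).

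For the basic $L^2$ estimate I would introduce an energy functional built from the physical entropy of the two-phase system, linearized around the stationary profile: something like
\[
\mathcal{E}_0(t)=\int_0^\infty \Big\{ \widetilde{\rho}\,\Phi\!\big(\tfrac{\rho}{\widetilde\rho}\big) + \tfrac12\widetilde\rho\,\psi^2 + \widetilde{n}\,\Phi\!\big(\tfrac{n}{\widetilde n}\big) + \tfrac12\widetilde n\,\omega^2 \Big\}\,dx,
\]
where $\Phi(s)=s-1-\ln s$ (or the pressure-potential analogue for general $\gamma,\alpha$) is the convex relative-entropy density. Differentiating in $t$, using all four perturbation equations and integrating by parts, the viscous terms produce $-\mu\int\psi_x^2-\int\widetilde n\,\omega_x^2$, the drag terms produce $-\int \widetilde n\,(\psi-\omega)^2\le 0$, the boundary terms at $x=0$ are controlled/favorable because $u_->0$ (the inflow) gives a definite-sign contribution of the form $\tfrac{u_-}{2}(\text{quadratic in }\varphi,\psi,\chi,\omega)$ at $x=0$ — this is exactly where the inflow condition is essential and where the Mach-number/sonic hypotheses enter: one needs the quadratic boundary form to be controllable by the interior dissipation, and in the sonic case $M_+=1$ the extra inequality assumed in the theorem ($|p_1'(\rho_+)-p_2'(n_+)|\le \sqrt2\, u_+ \min\{\cdots\}$) is precisely what guarantees positive-definiteness of the relevant boundary/interior quadratic form. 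The remaining terms are either cubic (absorbed by smallness of the solution) or involve derivatives of the stationary profile, which by Theorem \ref{thm stationary s} are $O(\delta)$ with exponential (or algebraic, in the sonic case) decay, hence absorbed into the left-hand side for $\delta$ small — in the sonic case this absorption is delicate because the profile decays only like $\delta/(1+\delta x)$, so one should use a weighted estimate or exploit $\widetilde u_x,\widetilde v_x\ge 0$ to get a good sign, much as in the known Navier-Stokes degenerate-boundary-layer analyses.

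Next I would obtain the first-order estimate. Differentiating the perturbation system once in $x$, multiplying by $(\varphi_x,\psi_x,\chi_x,\omega_x)$ with suitable weights and integrating, the parabolic structure gives $\int(\psi_{xx}^2+\omega_{xx}^2)$ dissipation and the damping gives $\int(\psi_x-\omega_x)^2$; the genuinely hyperbolic variables $\varphi,\chi$ have no self-dissipation, so — as is standard for this class of problems — I would recover $\int(\varphi_x^2+\chi_x^2)$ by the trick of multiplying the (differentiated) momentum equations by $\varphi_x$ (resp.\ the $n$-momentum by $\chi_x$): the term $p_1'(\widetilde\rho)\varphi_x\cdot\varphi_x$ appears with a good sign and the cost is an extra $\tfrac{d}{dt}\int \psi_x\varphi_x$-type term plus controllable remainders. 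Boundary terms at $x=0$ in the first-order estimate are the main technical nuisance: they involve $\psi_{xx}(t,0)$, $\omega_{xx}(t,0)$ and $\varphi_x(t,0),\chi_x(t,0)$, which are not directly controlled by the no-slip-type data; one handles them by using the equations themselves to express $\psi_{xx}(t,0)$ in terms of lower-order boundary data (since $\varphi(t,0)=\psi(t,0)=0$ implies $\varphi_t(t,0)=\psi_t(t,0)=0$, etc.), reducing everything to $\psi_x(t,0),\omega_x(t,0)$ which are then absorbed by $\varepsilon$-Young into the interior $\int\psi_{xx}^2$ term together with a trace inequality. Summing $\mathcal E_0$ and the first-order energy with appropriate small constants yields the closed a priori inequality; I would then state the continuation argument explicitly (local solution on $[0,T_0]$, a priori bound shows the $H^1$ norm does not reach the threshold, hence $T_0$ can be extended), and finally extract the $L^\infty$ decay as described above. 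The main obstacle I anticipate is the sonic case $M_+=1$: there the linearization degenerates, the stationary profile decays only algebraically, and the boundary quadratic form is only marginally positive, so the energy estimate must be done with a carefully chosen space- and solution-dependent weight (and using $\widetilde u_x,\widetilde v_x\ge0$), and it is exactly there that the extra structural assumption in the theorem has to be invoked to keep the boundary and lower-order terms under control.
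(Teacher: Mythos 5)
Your plan follows essentially the same route as the paper: the relative-entropy $L^{2}$ estimate with the drag term supplying $\int n(\psi-\omega)^{2}$, the cross-multiplication trick (momentum equations times $\varphi_{x}$, resp.\ $\chi_{x}$) to recover dissipation of the density gradients, the $-\psi_{xx}$, $-\omega_{xx}$ multipliers for the second-order velocity dissipation, boundary terms reduced to $\psi_{x}(t,0),\omega_{x}(t,0)$ via the continuity equation at $x=0$ and absorbed by trace plus Young, and in the sonic case the signs $\widetilde u_{x},\widetilde v_{x}\ge 0$ together with the stated structural inequality to make the profile-weighted \emph{interior} quadratic form nonnegative (the paper handles the algebraically decaying profile with a Hardy-type inequality from the cited literature rather than a bespoke spatial weight, but this is the same idea). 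There is no substantive divergence from the paper's argument.
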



\vspace{2ex}
\noindent{\textbf{Notation.}}      
$~~$
We denote by $\| \cdot\|_{L^{p}}$ the norm of the usual Lebesgue space $L^{p}=L^{p}({\mathbb{R}_{+}})$,  $1 \leq p \leq \infty$. And if $p=2$, we write $\| \cdot \|_{L^{p}(\mathbb{R}_{+})}=\|\cdot\|$.
 $H^{s}(\mathbb{R}_{+})$ stands for the standard $s$-th  Sobolev space over $\mathbb{R}_{+}$ equipped with its norm
 $\| f \|_{H^{s}(\mathbb{R}_{+})}=\| f \|_{s}=( \sum\limits_{i=0}^{s} \| \partial^{i}f \|^{2})^{\frac{1}{2}}$. $C([0,T]; H^{1}(\mathbb{R}_{+}))$ represents the space of continuous functions on the interval $[0,T]$ with values in $H^{s}(\mathbb{R}_{+})$. 

\vspace{2ex}

The rest of this paper will be organized as follows.
 We investigate the existence and uniquenesss of  the stationary solution in Section \ref{sec:2},
and gain the nonlinear stability of the solution in Section \ref{sec:$M_{+} >1$}.
\section{Existence of Stationary Solution}
\label{sec:2}
We prove Theorem \ref{thm stationary s} on the the existence and uniqueness of the stationary solution to $(\ref{stationary f})$-$(\ref{stationary boundary c})$  with	$u_{+}>0$ and $\delta$ sufficiently small as follows. The natural idea is to apply the center manifold theory\cite{C} to the BVP $({\ref{stationary f}})$-$(\ref{stationary boundary c})$, 
where  it is necessary to  get the bound estimates of $\wt{u}_{x}(0)$ or $\wt{v}_{x}(0)$. 
\begin{lem}
	Assume that   $ u_{+}>0,
	\f{u_{+}}{u_{-}}= \f{\r_{-}}{\r_{+}}=\f{n_{-}}{n_{+}}$,   $\delta:=|u_{+}-u_{-}|>0$ hold with $\delta$ sufficiently small. 
Then the solution $(\wt{\r},\wt{u},\wt{n},\wt{v})$  to the BVP $({\ref{stationary f}})$-$(\ref{stationary boundary c})$ satisfies
\be|\wt{u}_{x}(0)|\leq C|u_{-}-u_{+}|, \quad|\wt{v}_{x}(0)|\leq C|u_{-}-u_{+}|,\ee where $C>0$ is 	a positive constant.
\end{lem}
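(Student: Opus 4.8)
The plan is to combine an exact balance law for the total momentum flux, which controls the single combination $\mu\wt{u}_{x}(0)+n_{-}\wt{v}_{x}(0)$, with one energy estimate for the stationary system, which then separates the two boundary derivatives. First I would use $(\ref{stationary f})_{1}$, $(\ref{stationary f})_{3}$ and $(\ref{wt r u})$ to set $j_{1}:=\wt{\rho}\wt{u}\equiv\rho_{-}u_{-}=\rho_{+}u_{+}$ and $j_{2}:=\wt{n}\wt{v}\equiv n_{-}u_{-}=n_{+}u_{+}$, so that $(\ref{stationary f})_{2}$, $(\ref{stationary f})_{4}$ read $(j_{1}\wt{u}+p_{1}(\wt{\rho}))_{x}=\mu\wt{u}_{xx}+\wt{n}(\wt{v}-\wt{u})$ and $(j_{2}\wt{v}+p_{2}(\wt{n}))_{x}=(\wt{n}\wt{v}_{x})_{x}-\wt{n}(\wt{v}-\wt{u})$. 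Adding these makes the drag terms cancel, so $j_{1}\wt{u}+p_{1}(\wt{\rho})+j_{2}\wt{v}+p_{2}(\wt{n})-\mu\wt{u}_{x}-\wt{n}\wt{v}_{x}$ is independent of $x$; evaluating at $x=0$ and at $x=+\infty$ (where $\wt{u}_{x},\wt{v}_{x}\to0$ by the far-field behaviour of the strong solution) gives
$$\mu\wt{u}_{x}(0)+n_{-}\wt{v}_{x}(0)=(j_{1}+j_{2})(u_{-}-u_{+})+\big(p_{1}(\rho_{-})-p_{1}(\rho_{+})\big)+\big(p_{2}(n_{-})-p_{2}(n_{+})\big).$$
Since $\rho_{-}/\rho_{+}=n_{-}/n_{+}=u_{+}/u_{-}$ and $u_{-}$ is close to $u_{+}>0$ ($\delta$ being small), $|\rho_{-}-\rho_{+}|+|n_{-}-n_{+}|\le C\delta$, so the right-hand side is $O(\delta)$ and $|\mu\wt{u}_{x}(0)+n_{-}\wt{v}_{x}(0)|\le C\delta$. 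This alone does not control the two derivatives individually.

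To separate them I would multiply the $\wt{u}$-momentum equation by $\wt{u}-u_{+}$, the $\wt{v}$-momentum equation by $\wt{v}-u_{+}$, add, and integrate over $\mb{R}_{+}$. Because of the constraints, $p_{1}(\wt{\rho})$ is a function of $\wt{u}$ alone and $p_{2}(\wt{n})$ of $\wt{v}$ alone, so the two pressure-flux terms telescope into boundary contributions $\Psi_{1}(u_{-})+\Psi_{2}(u_{-})$ with $\Psi_{i}$ vanishing quadratically at $u_{+}$, hence $O(\delta^{2})$; the viscous terms produce $-\int_{\mb{R}_{+}}(\mu\wt{u}_{x}^{2}+\wt{n}\wt{v}_{x}^{2})\,dx$; and the two interaction terms combine into $\int_{\mb{R}_{+}}\wt{n}(\wt{v}-\wt{u})\big((\wt{u}-u_{+})-(\wt{v}-u_{+})\big)\,dx=-\int_{\mb{R}_{+}}\wt{n}(\wt{v}-\wt{u})^{2}\,dx$, with the favourable sign precisely because the drag enters the two equations with opposite signs. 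The one boundary term at $x=0$ that is not manifestly small is $(u_{-}-u_{+})\big(\mu\wt{u}_{x}(0)+n_{-}\wt{v}_{x}(0)\big)$, which is $O(\delta^{2})$ by the identity above. Altogether
$$\int_{\mb{R}_{+}}\big(\mu\wt{u}_{x}^{2}+\wt{n}\wt{v}_{x}^{2}+\wt{n}(\wt{v}-\wt{u})^{2}\big)\,dx\le C\delta^{2}.$$

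It remains to turn this into a pointwise bound at $x=0$. From the $\wt{u}$-momentum equation $\mu\wt{u}_{xx}=a(\wt{u})\wt{u}_{x}-\wt{n}(\wt{v}-\wt{u})$, with $a$ smooth and bounded along the solution (as $\wt{\rho},\wt{n}$ stay in a fixed compact subset of $(0,\infty)$), so $\|\wt{u}_{xx}\|\le C(\|\wt{u}_{x}\|+\|\wt{v}-\wt{u}\|)\le C\delta$ and hence $\|\wt{u}_{x}\|_{H^{1}}\le C\delta$; the trace inequality $|f(0)|\le\|f\|_{H^{1}}$ on $\mb{R}_{+}$ then gives $|\wt{u}_{x}(0)|\le C\delta=C|u_{-}-u_{+}|$. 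Feeding this back into the total-momentum identity yields $|\wt{v}_{x}(0)|\le C(\delta+|\wt{u}_{x}(0)|)\le C|u_{-}-u_{+}|$, which is the assertion.

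The main obstacle is the boundary contribution at $x=0$: since this is an inflow problem, integration by parts in the energy estimate leaves a term proportional to exactly the combination $\mu\wt{u}_{x}(0)+n_{-}\wt{v}_{x}(0)$ one wishes to bound, so the estimate looks self-referential; the escape is the total-momentum balance, where the drag forces cancel and which therefore pins down that combination a priori. The remaining points — that the boundary terms at $x=+\infty$ vanish, using the regularity and far-field decay of the strong solution, and that in the sonic case $M_{+}=1$ the decay is only algebraic (which affects only the size of the constants, not this $L^{2}$-based argument) — are routine here, so no extra hypothesis is needed for the lemma. (Alternatively one may recast $(\ref{stationary f})$ as the autonomous first-order system for $(\wt{u},\wt{v},P)$ with $P(x):=\int_{x}^{+\infty}\wt{n}(\wt{v}-\wt{u})\,dy$ and equilibrium $(u_{+},u_{+},0)$: the strong solution lies on the local center-stable manifold, a Lipschitz graph through that point, which forces $|P(0)|\le C\delta$ and hence the same estimate.)
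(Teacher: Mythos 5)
Your argument is correct, and its first two steps coincide with the paper's: you integrate the sum of the two momentum equations over $(0,\infty)$ to pin down the combination $\mu\wt{u}_{x}(0)+n_{-}\wt{v}_{x}(0)=\mu\wt{u}_{x}(0)+\f{n_{+}u_{+}}{u_{-}}\wt{v}_{x}(0)$ up to $O(\delta)$, and you then run an energy estimate (the paper tests with $\wt{u},\wt{v}$, you test with $\wt{u}-u_{+},\wt{v}-u_{+}$; the two differ only by a multiple of the momentum balance) to get $\int_{0}^{\infty}\bigl(\mu\wt{u}_{x}^{2}+\wt{n}\wt{v}_{x}^{2}+\wt{n}(\wt{v}-\wt{u})^{2}\bigr)dx\le C\delta^{2}$, correctly observing that the only dangerous boundary term is $(u_{-}-u_{+})\bigl(\mu\wt{u}_{x}(0)+n_{-}\wt{v}_{x}(0)\bigr)=O(\delta^{2})$. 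Where you genuinely diverge is the extraction of the individual pointwise values: the paper multiplies the $\wt{v}$-equation by $\wt{v}_{x}/\wt{v}$ and integrates, so that $\wt{v}_{x}(0)^{2}$ appears directly as a boundary term, and it needs an auxiliary weighted estimate (multipliers $\wt{u}^{b},\wt{v}^{b}$ with $b<-\max\{\a,\gm\}-1$) to control the resulting right-hand side; you instead read off $\mu\wt{u}_{xx}=a(\wt{u})\wt{u}_{x}-\wt{n}(\wt{v}-\wt{u})$ from the first momentum equation, deduce $\|\wt{u}_{x}\|_{H^{1}}\le C\delta$, apply the trace inequality $|\wt{u}_{x}(0)|^{2}\le 2\|\wt{u}_{x}\|\,\|\wt{u}_{xx}\|$, and recover $\wt{v}_{x}(0)$ from the momentum balance. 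Your route is arguably cleaner: it exploits the fact that the first equation has constant viscosity (so no $\wt{v}_{x}^{2}$ terms appear when differentiating), and it dispenses with the weighted multipliers entirely. Both arguments share the same implicit standing hypotheses, namely that $\wt{u},\wt{v}$ stay in a fixed compact subset of $(0,\infty)$ and that $\wt{u}_{x},\wt{v}_{x}\to 0$ at infinity, which is the level of rigor the paper itself adopts.
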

\begin{proof}
Due to $\wt{\r}=\f{\r_{+}u_{+}}{\wt{u}}$ and $\wt{n}=\f{n_{+}u_{+}}{\wt{v}}$,  we obtain
\be
\left\lbrace 
\begin{split}
&(\r_{+}u_{+}\wt{u}+A_{1}\r^{\gm}_+ u^{\gm}_+ \wt{u}^{-\gm})_{x}=(\mu \wt{u}_{x})_{x}  +\f{n_+ u_{+}}{\wt{v}}(\wt{v}-\wt{u}),\\&
(n_{+}u_{+}\wt{v}+A_{2}n^{\a}_+  u^{\a}_+ \wt{v}^{-\a})_{x}=(n_+ u_+ \f{\wt{v}_{x}}{\wt{v}})_x -\f{n_+ u_{+}}{\wt{v}}(\wt{v}-\wt{u}).
\end{split}
\right. 
\label{sf}
\ee
Adding  $(\ref{sf})_{1}$ to $(\ref{sf})_{2}$ together,  then integrating the resulted equation over $(0,\infty)$, we have 
\be \mu \wt{u}_{x}(0)+\f{n_{+}u_{+}}{u_{-}}\wt{v}_{x}(0)=\f{1}{u_+}[(\r_{+}+n_{+}) u^{2}_+ - (A_{1}\gm \r^{\gm}_+ +A_{2} \a n^{\a}_+)] (u_{-} -u_{+})+O(|u_{-}-u_{+}|^{2}) .
\label{wt u v}\ee 
Multiplying  $(\ref{sf})_{1}$ by $\wt{u}$, $(\ref{sf})_{2}$ by $\wt{v}$ respectively,  then adding them together and intergrating the resulted equation over $(0, \infty)$ lead to 
\be 
\begin{aligned}
&
\int_{0}^{\infty} (\mu \wt{u}_{x}^{2}+n_+ u_+\f{\wt{v}^{2}_{x}}{\wt{v}})dx
+\int_{0}^{\infty}\f{n_+ u_+}{\wt{v}}(\wt{v}-\wt{u})^{2}dx
\\
=&
-u_{-}(\mu \wt{u}_{x}(0)+n_{+}u_{+}\f{\wt{v}_{x}(0)}{\wt{v}})+
[(\r_{+}+n_{+})u_{+}^{2}-(A_{1}\gm \r^{\gm}_{+}+A_{2}\a n^{\a}_{+} )](u_{-}-u_{+})+O(|u_{-}-u_{+}|^{2})
\\
=&O(|u_{-}-u_{+}|^{2}),
\end{aligned}
\ee 
where we have used $(\ref{wt u v})$.\\
Multiplying $(\ref{sf})_{1}$ by $\wt{u}^{b} (b<-\max\{\a,\gamma\}-1)$, $(\ref{sf})_{2}$ by $\wt{v}^{b}$ respectively,  then adding them togther and  intergrating the resulted equation over $(0, \infty)$ lead to 
\begin{align}
&
\int_{0}^{\infty} (-b\mu \wt{u}^{2}_{x}\wt{u}^{b-1}-bn_+ u_+\f{\wt{v}^{2}_{x}}{\wt{v}}\wt{v}^{b-1})dx
+\int_{0}^{\infty}\f{n_+ u_+}{\wt{v}}(\wt{v}-\wt{u})(\wt{u}^{b}-\wt{v}^{b})dx
\notag
\\
=&
u^{b}_{-}(\mu \wt{u}_{x}(0)+n_{+}u_{+}\f{\wt{v}_{x}(0)}{\wt{v}})-u_{-}^{b-1}[(\r_{+}+n_{+})u_{+}^{2}-(A_{1}\gm \r^{\gm}_{+}+A_{2}\a n^{\a}_{+} )](u_{-}-u_{+})
+O(|u_{-}-u_{+}|^{2})
\notag
\\
=&
O(|u_{-}-u_{+}|^{2}),
\end{align}
where we have used $(\ref{wt u v})$.\\
Multiplying $(\ref{sf})_{2}$ by $\f{\wt{v}_{x}}{\wt{v}}$ and then intergrating the resulted equality over $(0, \infty)$ yield 
\be
\begin{aligned}
&
n_{+} u_{+}\f{\wt{v}^{2}_{x}(0)}{2u^{2}_{-}}
+
\int_{0}^{\infty}n_{+} u_{+}\f{\wt{v}^{2}_{x}}{\wt{v}}dx
\\
=&
\int_{0}^{\infty}A_{2}\a n^{\a}_{+}\f{1}{u^{2}_{+}}(\f{\wt{v}}{u_{+}})^{-(\a +2)}\wt{v}^{2}_{x}dx -\int_{0}^{\infty}\f{n_+ u_+}{\wt{v}}(\wt{v}-\wt{u})\f{\wt{v}_{x}}{\wt{v}}dx
:=\sum_{i=1}^{2}\mathcal{I}_{i}.
\label{sf'b3}
\end{aligned}
\ee
We turn to estimate terms in the right hand side of $(\ref{sf'b3})$. Using  
$(\ref{wt u v})$, we have 
\be 
\begin{split}
|I_{1}|+|I_{2}|\leq C\int_{0}^{\infty}\f{\r_+ u_+}{\wt{v}}(\wt{v}-\wt{u})^{2}dx+C\int_{0}^{\infty}\wt{v}^{b-2}\wt{v}^{2}_{x}dx\leq C|u_{-}-u_{+}|^{2},
\label{sf''b2}
\end{split}
\ee 
With the help of  $(\ref{sf'b3})$,  $(\ref{sf''b2})$ and $(\ref{wt u v})$, we get 
\be
|\wt{v}_{x}(0)|\leq C|u_{-}-u_{+}|, \quad |\wt{u}_{x}(0)|\leq C|u_{-}-u_{+}| .
\label{sf1}
\ee
\end{proof}
\vspace{2ex}
We use the  manifold theory \cite{C}
to obtain the existence and uniqueness of the stationary solution $(\ref{stationary f})$-$(\ref{stationary boundary c})$ of full two-phase flow  model. 
Firstly, it is necessary to reformulate the system $(\ref{stationary f})$ into a $3\times 3$  system of ordinary differential equations of the  first-order. 

Adding $(\ref{stationary f})_{2}$, $(\ref{stationary f})_{4}$ together and  substituting $\wt{\r}=\f{\r_{+}u_{+}}{\wt{u}}$, $\wt{n}=\f{n_{+}u_{+}}{\wt{v}}$ into the resulted equation, then integrating the resulted equality over $(x,\infty)$, we gain 
\be 
\r_{+}u_{+}(\wt{u}-1)+A_{1}\r^{\gm}_{+}u^{\gm}_{+}(\wt{u}^{-\gm}-1)+n_{+}u_{+}(\wt{v}-1)+A_{2}n^{\a}_{+}u^{\a}_{+}(\wt{v}^{-\a}-1)
=\mu \wt{u}_{x}+n_{+}u_{+}\f{\wt{v}_{x}}{\wt{v}}.
\ee
 We consider the following system
\begin{equation}
\left\lbrace 
\begin{aligned}
&
(\r_{+}u_{+}\wt{u} +A_{1}\r^{\gm}_{+}u^{\gm}_{+}\wt{u}^{-\gm})_{x}
=(\mu \wt{u}_{x})_{x} +n_{+}u_{+}(1 -\f{\wt{u}}{\wt{v}}),
\\
&
\r_{+}u_{+}(\wt{u} -1) +A_{1}\r^{\gm}_{+}u^{\gm}_{+}(\wt{u}^{-\gm} -1) +n_{+}u_{+}(\wt{v} -1) +A_{2}n^{\a}_{+}u^{\a}_{+}(\wt{v}^{-\a} -1)
=\mu \wt{u}_{x} +n_{+}u_{+}\f{\wt{v}_{x}}{\wt{v}}.
\end{aligned}
\label{2sf}
\right. 
\end{equation}
It is easy to see that the system $(\ref{2sf})$ contains a second-order equation and a first-order equation.
 Hence, the system $(\ref{2sf})$ can be  reformulated into the following  system 
\be 
\left\lbrace 
\begin{aligned}
&
\wt{u}_{x}=\wt{w},
\\
&
\wt{w}_{x}= \f{1}{\mu}[\r_{+}u_{+}\wt{w} -A_{1}\r^{\gm}_{+}u^{\gm}_{+}\wt{u}^{-\gm-1}\wt{w} -n_{+}u_{+}(1-\f{\wt{u}}{\wt{v}})],
\\
&
\wt{v}_{x} =\f{\wt{v}}{n_{+}u_{+}}[ \r_{+}u_{+}(\wt{u} -u_{+}) +A_{1}\r^{\gm}_{+}(\f{u^{\gm}_{+}}{\wt{u}^{\gm}} -1) +n_{+}u_{+}(\wt{v} -u_{+}) +A_{2}n^{\a}_{+}u^{\a}_{+}(\f{u^{\a}_{+}}{\wt{v}^{\a}} -1) -\mu \wt{w}].
\end{aligned}
\right. 
\label{3sf}
\ee 
The boundary condition satisfies 
\be
(\wt{u},\wt{w},\wt{v})(0)=(u_{-},\wt{u}_{x}(0),u_{-}),\quad \lim_{x\to \infty}(\wt{u},\wt{w},\wt{v})(x)=(u_{+},0,u_{+}).
\label{sf c}
\ee 
Define the perturbation near the far filed state $(u_{+},0,u_{+})$ as   $(\bar{u},\bar{w},\bar{v})=(\wt{u}-u_{+},\wt{w},\wt{v}-u_{+})$. The system $(\ref{3sf})$, $(\ref{sf c})$ can be rewritten as follows:
\begin{equation}
\left\lbrace 
\begin{split}
&\frac{d}{dx}
\begin{pmatrix}
\bar{u}\\ 
\bar{w}\\ 
\bar{v}
\end{pmatrix}
=\boldsymbol{J_{+}}\begin{pmatrix}
\bar{u}\\ 
\bar{w}\\ 
\bar{v}
\end{pmatrix}
+\begin{pmatrix}
\bar{g}_{1}(\bar{u},\bar{w},\bar{v})\\ 
\bar{g}_{2}(\bar{u},\bar{w},\bar{v})\\ 
\bar{g}_{3}(\bar{u},\bar{w},\bar{v})
\end{pmatrix},\\&
(\bar{u},\bar{w}, \bar{v})(0):=(\b{u}_{-},\bar{w}_{-},\bar{v}_{-})=(u_{-}-u_{+},\wt{u}_{x}(0),u_{-}-u_{+}), \quad \lim_{x\to\infty} (\bar{u},\bar{w}, \bar{v})=(0,0,0),
\end{split}
\right. 
\label{sf bar}
\end{equation}
where the  matrix $\boldsymbol{J_{+}}$ is the   defined as follows:
\be
\boldsymbol{J_{+}}=
\begin{pmatrix}
	0 &1  &0 \\ 
	\f{n_{+}}{\mu }& \frac{1}{\mu u_{+}}(\r_{+}u_{+}^{2}-A_{1}\gamma \rho^{\gamma}_{+}) &-\f{n_{+}}{\mu} \\ 
	\frac{1}{n_{+}u_{+}}(\r_{+}u_{+}^{2}-A_{1}\gamma \rho^{\gamma}_{+})  &-\f{\mu}{n_{+}}  & \frac{1}{n_{+}u_{+}}(n_{+}u_{+}^{2}-A_{2}\alpha n^{\alpha}_{+}) 
\end{pmatrix}
\ee
and  $\bar{g}_{1},\bar{g}_{2}, \bar{g}_{3}$ are nonlinear functions defined by
\be
\begin{aligned}
\bar{g}_{1}(\bar{u},\bar{w},\bar{v})
=&0,
\\
\bar{g}_{2}(\bar{u},\bar{w},\bar{v})=&
\f{1}{2}(2\f{n_{+}}{\mu}\f{1}{u_{+}}\bar{v}^{2}-2\f{n_{+}}{\mu}\f{1}{u_{+}}\bar{u}\bar{v}+2\f{A_{1}\gm (\gm +1)\r^{\gm}_{+}}{\mu u^{2}_{+}}\bar{w}\bar{u})+O(|\bar{v}|^{3}+|\bar{u}|^{3}|+|\bar{w}|^{3}),
\\
\bar{g}_{3}(\bar{u},\bar{w},\bar{v})=&
\f{1}{2}[ \f{A_{1}\gm(\gm+1)\r^{\gm}_{+}}{n_{+}u^{2}_{+}}\bar{u}^{2}
+2\f{\r_{+}u_{+}^{2}-A_{1}\gm\r^{\gm}_{+}}{n_{+}u^{2}_{+}}\bar{v} \bar{u} +(2\f{n_{+}u_{+}^{2}-A_{2}\a n^{\a}_{+}}{n_{+}u^{2}_{+}}+\f{A_{2}\a(\a+1)n^{\a}_{+}}{n_{+}u^{2}_{+}})\bar{v}^{2}
\\
&
-2\f{\mu}{n_{+}u_{+}}\bar{w}\bar{v}]+O(|\bar{v}|^{3}+|\bar{u}|^{3}|+|\bar{w}|^{3}).
\end{aligned}
\label{g}
\ee
 Three eigenvalues $\l_{1}, \l_{2}, \l_{3}$ of matrix $\boldsymbol{J_{+}}$ satisfy 
\be
\left\lbrace 
\begin{aligned}
\l&_{1}\l_{2}\l_{3}=-\f{(n_{+}+\r_{+})u^{2}_{+}-(A_{1}\gm \r^{\gm}_{+}+A_{2}\a \r^{\a}_{+} )}{\mu u_{+}},
\\
&
 \l_{1}+\l_{2}+\l_{3}=\f{\r_{+}u^{2}_{+}-A_{1}\gm \r^{\gm}_{+}}{\mu u_{+}}+\f{n_{+}u^{2}_{+}-A_{2}\a \r^{\a}_{+}}{n_{+} u_{+}},
 \\
 &
\l_{1}\l_{2}+\l_{1}\l_{3}+\l_{2}\l_{3}=\r_{+}\f{(u_{+}^{2}-A_{1}\gm\r_{+}^{\gm-1})(u_{+}^{2} -A_{2}\a\r_{+}^{\a-1})}{\mu u^{2}_{+}}-1-\f{n_{+}}{\mu}.
\end{aligned}
\right. 
\label{l f1}
\ee 
If $M_{+}>1$, it is easy to check
\be
\l_{1}\l_{2}\l_{3}<0, 
\label{l3}
\ee
and
\be
u_{+}^{2}>\min\{A_{1}\gm\r_{+}^{\gm-1}, A_{2}\a n_{+}^{\a-1}\}.
\ee
Moreover,
  we obtain
\be
\l_{1}+\l_{2}+\l_{3}>0, \quad {\rm for}\quad u^{2}_{+}>\max\{ A_{1}\gm\r_{+}^{\gm-1}, A_{2}\a n_{+}^{\a-1}\}
\label{l1}
\ee
and   get
\be 
\l_{1}\l_{2}+\l_{1}\l_{3}+\l_{2}\l_{3}<0,\quad {\rm for}\quad  u^{2}_{+}>\max\{ A_{1}\gm\r_{+}^{\gm-1}, A_{2}\a n_{+}^{\a-1}\}.
\label{l2}
\ee
Due to $(\ref{l3})$, $(\ref{l1})$, $(\ref{l2})$, we  have
\be 
{\rm Re}\l_{1}>0, {\rm Re}\l_{2}>0, \l_{3}<0, \quad {\rm for }~M_{+}>1.
\label{M+}
\ee 
Using the similar arguments,  we have the following results:
\be 
\left\lbrace 
\begin{aligned}
	&{\rm if}~ M_{+}>1,{ \rm then}~ {\rm Re}\l_{1}>0, {\rm Re}\l_{2}>0, \l_{3}<0,\\&
	{\rm if}~ M_{+}<1, {\rm then}~{\rm Re}\l_{1}<0, {\rm Re}\l_{2}<0, \l_{3}>0, \\&
	{\rm if} ~M_{+}=1, {\rm then} ~\l_{1}>0, \l_{2}<0, \l_{3}=0. 
\end{aligned}
\right. 
\label{M_+}
\ee
In order to prove the existence of the solution $(\bar{u},\bar{w},\bar{v})$ to  the BVP $(\ref{sf bar})$, we need to diagonalize the system $(\ref{sf bar})$.
Take  a linear  coordinate transformation
\be 
\begin{pmatrix}
	z_{1}\\ 
	z_{2}\\ 
	z_{3}
\end{pmatrix}
=\boldsymbol{P}^{-1}\begin{pmatrix}
	\bar{u}\\ 
	\bar{w}\\ 
	\bar{v}
\end{pmatrix},
\label{trans}
\ee
where $()^{T}$ denotes the transpose of a row vector and the    invertible matrix $\boldsymbol{P}$ satisfies
\be 
\boldsymbol{P}^{-1}\boldsymbol{J_{+}}\boldsymbol{P}=
\begin{pmatrix}
	\lambda_{1} &*  &0 \\ 
	0 & \lambda_{2} &0 \\ 
	0 &0  & \lambda_{3}
\end{pmatrix}.
\ee 
Thus, we have 
 \be 
 \begin{aligned}
&\frac{d}{dx}\begin{pmatrix}
	z_{1}\\ 
	z_{2}\\ 
	z_{3}
\end{pmatrix}=\begin{pmatrix}
	\lambda_{1} & * &0 \\ 
	0& \lambda_{2} &0 \\ 
	0& 0 & \lambda_{3}
\end{pmatrix}
\begin{pmatrix}
	z_{1}\\ 
	z_{2}\\ 
	z_{3}
\end{pmatrix}
+
\begin{pmatrix}
	g_{1}(z_{1},z_{2},z_{3})\\ 
	g_{2}(z_{1},z_{2},z_{3}) \\ 
	g_{3}(z_{1},z_{2},z_{3})
\end{pmatrix},\\&
(z_{1},z_{2},z_{3})(0)=(z_{1-},z_{2-},z_{3-}),\quad \lim_{x\to \infty}(z_{1},z_{2},z_{3})=(0,0,0),
\label{sf z}
 \end{aligned}
 \ee
 where nonlinear functions $g_{i}(i=1,2,3)$ and the boundary condition $(z_{1-},z_{2-},z_{3-})$ are denoted by
 \be
 \begin{pmatrix}
 	g_{1}(z_{1},z_{2},z_{3})\\ 
 	g_{2}(z_{1},z_{2},z_{3}) \\ 
 	g_{3}(z_{1},z_{2},z_{3})
 \end{pmatrix}
=\boldsymbol{P}^{-1}
\begin{pmatrix}
	0\\ 
	\bar{g}_{2}(\bar{u},\bar{w},\bar{v})\\ 
	\bar{g}_{3}(\bar{u},\bar{w},\bar{v})
\end{pmatrix},
\quad 
\begin{pmatrix}
	z_{1-}\\ 
	z_{2-} \\ 
	z_{3-}
\end{pmatrix}=
\boldsymbol{P}^{-1}
\begin{pmatrix}
	\b{u}_{-}\\ 
	\bar{w}_{-} \\ 
	\bar{v}_{-}
\end{pmatrix}
\label{bg}
 \ee 
 Then, we show case (i) and case (ii) in Theorem \ref{thm stationary s}.
 
${\rm (i)}$ 
For the supersonic case $M_{+}>1$  which satisfies ${\rm Re}\l_{1}>0,{\rm Re}\l_{2}>0,\l_{1}>0$ and  ${\rm Re}\l_{3}<0$, there exists a one-dimension local stable manifold
\be W^{s}(0,0,0)=\{(z_{1},z_{2},z_{3})~ | z_{1}=h^{s}_{1}(z_{3}), z_{2}=h^{s}_{2}(z_{3}), |z_{3}|~{\rm sufficiently~small}   \},\ee
where $h^{s}_{i}, i=1,2$ are smooth functions and $h^{s}_{i}(0)=0,~ Dh^{s}_{i}(0)=0,~i=1,2$.
Therefore, according to manifold theorem, if  
$(z_{1-},z_{2-},z_{3-})\in W^{s}(0,0,0)$, then there exists a unique solution to problem $(\ref{sf bar})$ satisfying 
\be 
|\p^{k}(z_{1},z_{2},z_{3})|\leq C\delta e^{-mx}
\label{ed}
\ee
where we used $(z_{1-},z_{2-},z_{3-})^{T}=\boldsymbol{P}^{-1}(\bar{u}_{-},\b{w}_{-},\b{v}_{-})^{T}$ and $|(\b{u}_{-},\b{w}_{-},\bar{v})|\leq C\delta$.

${\rm (ii)}$  
For  the subsonic case $M_{+}<1$  which leads to  $\l_{1}>0,{\rm Re}\l_{2}<0, {\rm Re}\l_{3}<0$,  there exists a two-dimension local stable manifold
\be W^{s}_{2}(0,0,0)=\{(z_{1},z_{2},z_{3})~ | z_{3}=g^{s}(z_{1},z_{2}),  |(z_{1},z_{2})|~{\rm sufficiently ~small}   \},\ee
where $g^{s}$ is a smooth function and $g^{s}(0)=0,~ Dg^{s}(0)=0$. Thus, according to manifold theorem, if  
$(z_{1-},z_{2-},z_{3-})\in W_{2}^{s}$, then there exists a unique solution to problem $(\ref{sf bar})$ satisfying $(\ref{ed})$.

Finally, we prove  case (iii) in Theorem \ref{thm stationary s}.

${\rm (iii)}$ 
We consider the sonic case $M_{+}=1$  which implies $\l_{1}>0, \l_{2}<0, \l_{3}=0$. Moreover, we have  
\be
\left\lbrace 
\begin{split}
&\l_{1}+\l_{2}=\f{\r_{+}}{\mu}\f{u^{2}_{+}-A_{1}\gm\r^{\gm-1}}{u_{+}}+\f{u^{2}_{+}-A_{2}\a n^{\a-1}}{u_{+}},\\&
\l_{1}\l_{2}=\f{\r_{+}}{\mu u^{2}_{+}}(u^{2}_{+}-A_{1}\gm \r^{\gm-1}_{+})(u^{2}_{+}-A_{2}\a n^{\a-1}_{+})-(1+\f{n_{+}}{\mu})<0.
\end{split}
\right. 
\ee
The eigenvectors of $\l_{1},\l_{2},\l_{3}$ are obtained respectively as follows
\be
r_{1}=
\begin{pmatrix}
1\\ 
\l_{1}\\ 
-\f{\mu}{n_{+}}(\l_{1}^{2}-\f{\r_{+}u^{2}_{+}-A_{1}\gm\r^{\gm}_{+} }{\mu u_{+}}\l_{1}-\f{n_{+}}{\mu})
\end{pmatrix},
r_{2}=
\begin{pmatrix}
	1\\ 
	\l_{2}\\ 
	-\f{\mu}{n_{+}}(\l_{2}^{2}-\f{\r_{+}u^{2}_{+}-A_{1}\gm\r^{\gm}_{+} }{\mu u_{+}}\l_{2}-\f{n_{+}}{\mu})
\end{pmatrix},
r_{3}=
\begin{pmatrix}
	1\\ 
0\\ 
1
\end{pmatrix}
\label{ev}
\ee
Hence, we have
\be
\boldsymbol{P}=[r_{1},r_{2},r_{3}].
\label{em}
\ee
With the help of  manifold theorem \cite{C}, there exist a local center manifold $W^{c}(0,0,0)$ and a local stable manifold $W_{3}^{s}(0,0,0)$ 
\begin{align}
&
W^{c}(0,0,0)=\{(z_{1},z_{2},z_{3})~ |~ z_{1}=f^{c}_{1}(z_{3}), z_{2}=f^{c}_{2}(z_{3}),  |z_{3}|~{\rm sufficiently~small}   \},
\label{rd}
\\
&
W_{3}^{s}(0,0,0)=\{(z_{1},z_{2},z_{3})~ |~ z_{1}=f^{s}_{1}(z_{2}), z_{3}=f^{s}_{2}(z_{2}),  |z_{2}|~{\rm sufficiently ~small}   \},
\end{align}
where $f^{c}_{i}, f^{s}_{i}, i=1,2$ are smooth functions and $f^{c}_{i}(0)=0,~ Df^{c}_{i}(0)=0,~f^{s}_{i}(0)=0,~ Df^{s}_{i}(0)=0,~i=1,2$.
With  $(\b{u},\b{w},\b{v})^{T}=\boldsymbol{P}(z_{1},z_{2},z_{3})^{T}$, $(\ref{g})$,  and $(\ref{bg})$, we   gain
 \be
 \b{g}_{3}(z^{3})=az^{2}_{3}+O(|z_{1}|^{2}+|z_{2}|^{2}+|z_{3}|^{3}+|z_{1}z_{3}|+|z_{2}z_{3}|),
 \label{g3}
 \ee
 where 
 \be 
 a=\f{A_{1}\gm(\gm+1) \r^{\gm}_{+}+A_{2}\a (\a+1)n^{\a}_{+}}{2u^{2}_{+}(\mu+n_{+})(1+b^{2})}>0,\quad b=\f{\r_{+}(u_{+}^{2}-A_{1}\gm\r_{+}^{\gm-1})}{ |u_{+}|\sqrt{(\mu+n_{+})n_{+}}}
\ee 
 The system $(\ref{sf z})$  can be reformulated as follows
 \be
 \left\lbrace 
 \begin{split}
&z_{1x}=\l_{1}z_{1}+\b{g}(z_{1},z_{2},z_{3}),\\&
 	z_{2x}=\l_{2}z_{2}+\b{g}(z_{1},z_{2},z_{3}),\\&
 	z_{3x}=az_{3}^{2}+O(|z_{1}|^{2}+|z_{2}|^{2}+|z_{3}|^{3}+|z_{1}z_{3}|+|z_{2}z_{3}|).
 \label{z}
 \end{split}
\right. 
 \ee
 Let $\sigma_{1}(x)$ be a solution to $({\ref{z}})_{1}$ restricted on the local center manifold satisfying the equation 
 \be 
 	\sigma_{1x}=a\sigma_{1}^{2}+O(\sigma_{1}^{3}),\quad \sigma_{1}(x) \to 0 ~{\rm as}~ x \to +\infty.
 	\label{sg}
 \ee 
 which implies  that  there exists the  monotonically increasing solution $\sigma_{1}(x)<0$ to $(\ref{sg})$   for  $\sigma_{1}(0)<0$ and $|\sigma_{1}(0)|$ sufficiently small.  
 Therefore, if the initial data $(z_{1-},z_{2-},z_{3-})$ belongs to the region $\mathcal{M}\subset \mb{R}^{3}$ associated to the local stable manifold and the local center manifold, then  we have 
 \be 
 \begin{aligned}
 &
 z_{i}=O(\sigma_{1}^{2})+O(\delta e^{-cx}),~i=1,2
 \\
 &
 z_{3}=\sigma_{1}+O(\delta e^{-cx}),
 \end{aligned}
 \ee 
with $z_{3-}<0$, the smallness of $|(z_{1-},z_{2-},z_{3-})|$ and 
 \be
 c\f{\delta}{1+\delta x}\leq |\sigma_{1}| \leq C\f{\delta}{1+\delta x},\quad   |\partial ^{k}\sigma_{1}|\leq C \f{\delta^{k+1}}{(1+\delta x)^{k+1}}, \quad C>0,~~k=0,1,2,3...
 \ee  
 It is easy to get
 \be 
 |\partial_{x}^{k}(\wt{\r}-\r_{+},\wt{u}-u_{+}, \wt{n}-n_{+}, \wt{v}-u_{+})|\leq C  \f{\delta^{k+1}}{(1+\delta_{x}x)^{k+1}},\quad C>0,~~k=0,1,2,3...
 \ee 
 \be 
 (\wt{u}-u_{+}, \wt{v}-u_{+})_{x}=(a, a )\sigma^{2}_{1}+O(|\sigma_{1}|^{3})
 \ee
 with the help of $(\ref{trans})$, $(\ref{ev})$ and $(\ref{em})$. 
%


\section{Asymptotic stability of stationary solutions}
	 
\label{sec:$M_{+} >1$}
%
The  function space $Y(0,T)$ for $T>0$ is denoted by
\begin{equation}
\begin{aligned}
Y(0,T)=\{
~(\varphi, \eta, \psi)~ |~&
( \varphi, \eta,  \psi) \in C([0,T]; H^{1}(\mb{R}_{+})), \\
&( \varphi_{x},   \eta_{x}) \in L^{2}([0,T]; L^{2}(\mb{R}_{+})), ~\psi_{x} \in L^{2}([0,T]; H^{1}(\mb{R}_{+}))~
\}.
\end{aligned}
\end{equation}
Let
\be
\phi=\r-\wt{\r},\quad \psi=u-\wt{u},\quad \b{\h}=n-\wt{n},\quad \b{\s}=v-\wt{v}.
\ee 
Then the pertubation $(\phi,\psi,\b{\h},\b{\s})$ satisfies the following system
\be 
\left\lbrace 
\begin{aligned}
&	
\h_{t}+u\h_{x}+\r\s_{x}=-(\s\wt{\r}_{x}+\h\wt{u}_{x}),
\\
& 
\s_{t}+u\s_{x}+\f{p{'}_{1}(\r)}{\r}\h_{x}-\f{\mu\s_{xx}}{\r}-\f{n(\b{\s}-\s)}{\r}
=F_{1},
\\
&
\b{\h}_{t}+v\b{\h}_{x}+n\b{\s}_{x}
=-(\b{\s}\wt{n}_{x}+\b{\h}\wt{v}_{x}),\\&
\b{\s}_{t}+v\b{\s}_{x}+\f{p{'}_{2}(n)}{n}\b{\h}_{x}-\f{(n\b{\s}_{x})_{x}}{n}+(\b{\s}-\s)
=F_{2},
\label{f1}
\end{aligned}
\right. 
\ee 
where 
\be 
F_{1}=-[-\mu(\f{1}{\r}-\f{1}{\wt{\r}})\wt{u}_{xx} +\s\wt{u}_{x} +(\f{p{'}_{1}(\r)}{\r}-\f{p{'}_{1}(\wt{\r})}{\wt{\r}}) \wt{\r}_{x}-(\f{n}{\r}-\f{\wt{n}}{\wt{\r}})(\wt{v}-\wt{u})],
\label{F1}
\ee 
\be 
F_{2}=-[-(\f{1}{n}-\f{1}{\wt{n}})(\wt{n}\b{\s}_{x})_{x}-\f{(\b{\h}\wt{v}_{x})_{x}}{n}+\b{\s}\wt{v}_{x}+(\f{p{'}_{2}(n)}{n}-\f{p{'}_{2}(\wt{n})}{\wt{n}}) \wt{n}_{x}].~~~~~~~
\label{F2}
\ee 
The initial and boundary conditions to the system $(\ref{f1})$ satisfy
\be 
(\phi,\psi,\b{\h},\b{\s})(0,x):=(\h_{0},\s_{0},\b{\h}_{0},\b{\s}_{0})=(\r_{0}-\wt{\r},u_{0}-\wt{u},n_{0}-\wt{n},v_{0}-\wt{v}),
\label{intial d1}
\ee 
\be 
\lim_{x\to \infty}(\h_{0},\s_{0},\b{\h}_{0},\b{\s}_{0})=(0,0,0,0),\quad (\s,\b{\s})(t,0)=(0,0).
\label{boundary d1}
\ee 
\begin{prop}
	\label{prop time decay}
	Assume that the same assumptions  in Theorem $\ref{thm long time behavior}$ hold.
 Let $(\h,\s,\b{\h},\b{\s})$  be the solution to the problem $(\ref{f1})$-$(\ref{boundary d1})$ satisfying 
	$ (\h,\s,\b{\h},\b{\s}) \in Y(0,T)  $  for a certain positive constant T. Then there exist positive constants $\varepsilon$ and C independent of T such that if 
	\be \sup_{0\leq t\leq T}\|  (\h,\s,\b{\h},\b{\s}) \|_{1}+\delta\leq \v \label{priori e}\ee
	is satisfied, it holds for an arbitrary $t\in [0,T]$ that 	
	\begin{equation}
	\begin{split}
	&\| (\h,\s,\b{\h},\b{\s}) \|_{1}+\int_{0}^{t} \|  (\h_{x},\s_{x},\b{\h}_{x},\b{\s}_{x}) \|^{2}d\tau+\int_{0}^{t}\| (\b{\s}-\s,\s_{xx},\b{\s}_{xx})  \|^{2}d\tau \leq C\|(\h_{0},\s_{0},\b{\h}_{0},\b{\s}_{0}) \|_{1}^{2}.
	\end{split}
	\end{equation}
\end{prop}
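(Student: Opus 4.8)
The plan is to establish the a priori estimate by the standard energy method for inflow problems, proceeding through a hierarchy of weighted $L^2$-estimates and exploiting the dissipative structure of the drag term $-n(v-u)$. First I would derive the basic (zeroth-order) energy estimate: multiply $(\ref{f1})_1$ by an appropriate function of $\phi$ (coming from the pressure potential $p_1$), $(\ref{f1})_2$ by $\rho\psi$, $(\ref{f1})_3$ by a function of $\bar\phi$, and $(\ref{f1})_4$ by $n\bar\psi$, then add and integrate over $\mathbb R_+\times(0,t)$. The key algebraic point is that the cross-terms from the convective parts combine into a total $x$-derivative whose boundary contribution at $x=0$ has a favorable sign thanks to the inflow condition $u_-=u_+>0$ (outflow of the energy flux is controlled since $\psi(t,0)=\bar\psi(t,0)=0$, and the remaining boundary term involves $\phi(t,0)^2, \bar\phi(t,0)^2$ weighted by $u_-$ which is positive, i.e.\ it is of definite sign and can be dropped). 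The drag terms $\pm n(\bar\psi-\psi)$ produce the good dissipation $\int_0^t\|\bar\psi-\psi\|^2\,d\tau$, while the viscosity terms give $\int_0^t\|\psi_x\|^2 + \|\bar\psi_x\|^2\,d\tau$ (using $\inf n>0$, $\inf\rho>0$ from the stationary solution bounds). All error terms — those involving $F_1, F_2$ and the stationary-profile derivatives $\wt u_x,\wt\rho_x,\wt n_x,\wt v_x$ — are handled by the decay estimates $(\ref{M_{+}>1 stationary solution d})$ or $(\ref{sigma})$ from Theorem \ref{thm stationary s}, combined with the smallness $(\ref{priori e})$ and Sobolev embedding $H^1(\mathbb R_+)\hookrightarrow L^\infty$; these give terms bounded by $C(\v+\delta)(\text{dissipation}) + C\delta(\ldots)$ which are absorbed.

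Next I would recover the dissipation for the density perturbations $\phi_x$ and $\bar\phi_x$, which are not controlled by the basic estimate. For this I would use the standard trick for viscous compressible systems: multiply $(\ref{f1})_2$ by $\phi_x$ (or rather work with $\psi_x$ and integrate by parts to create $\int p_1'(\rho)\phi_x^2/\rho$), using $(\ref{f1})_1$ to rewrite the arising $\partial_t$-terms; this produces $\int_0^t\|\phi_x\|^2\,d\tau$ plus a boundary term at $x=0$ and error terms absorbable as before, at the cost of terms like $\int_0^t\|\psi_x\|^2\,d\tau$ already controlled. The analogous manipulation with $(\ref{f1})_4$ and $\bar\phi_x$ gives $\int_0^t\|\bar\phi_x\|^2\,d\tau$. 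Then I would perform the first-order estimate: differentiate the whole system $(\ref{f1})$ in $x$, multiply by the corresponding first-order quantities (again with the $\rho$- and $n$-weights), integrate, and close. Here the boundary terms require care because $\psi_{xx}(t,0)$ and $\bar\psi_{xx}(t,0)$ are not directly controlled; one evaluates $\psi_x(t,0)$ and $\bar\psi_x(t,0)$ from the equations $(\ref{f1})_2,(\ref{f1})_4$ restricted to $x=0$ (using $\psi(t,0)=\bar\psi(t,0)=0$) to express these boundary contributions in terms of quantities already bounded. This step also yields $\int_0^t\|\psi_{xx}\|^2 + \|\bar\psi_{xx}\|^2\,d\tau$ from the viscosity terms. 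Finally, adding all estimates with suitable small coefficients so that all dissipation terms on the right-hand side are absorbed into their counterparts on the left, and using $\delta,\v$ sufficiently small, yields the desired inequality.

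The main obstacle I anticipate is the treatment of the boundary terms at $x=0$ together with the error terms carrying the stationary-profile derivatives. Unlike the whole-space or outflow case, the inflow boundary generates nonzero fluxes of $\phi$ and $\bar\phi$ at $x=0$, and although the sign of $u_->0$ helps, one must check carefully that every boundary term appearing at zeroth and first order is either of good sign or dominated by the interior dissipation (this is where $u_+>0$ and the precise combination of weights is essential). Secondarily, in the sonic case $M_+=1$ the stationary solution decays only algebraically (rate $\delta/(1+\delta x)$), so the error terms are not exponentially small; controlling these requires the extra structural hypothesis on $|p_1'(\rho_+)-p_2'(n_+)|$ assumed in Theorem \ref{thm long time behavior}, which should be exactly what is needed to make a delicate term of the form $\int_0^t\int \wt u_x(\phi^2+\cdots)$ (or its analogue after diagonalizing the principal part at the far field) have the right sign or be absorbable — verifying this compatibility is the technical heart of the argument.
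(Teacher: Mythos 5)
Your proposal follows essentially the same route as the paper: a zeroth-order relative-entropy estimate (with the drag term yielding $\int\|\b\s-\s\|^2$), recovery of $\int\|(\h_x,\b\h_x)\|^2$ by pairing the differentiated continuity equations with the momentum equations, a final estimate multiplying the momentum equations by $-\s_{xx},-\b\s_{xx}$, boundary terms handled via the traces and the relations $u_-\h_x(0,t)+\r_-\s_x(0,t)=0$, and the sonic-case hypothesis on $|p_1'(\r_+)-p_2'(n_+)|$ used exactly to make the $\wt u_x$-weighted quadratic form nonnegative. This matches the paper's Lemmas \ref{lem e0}--\ref{lem psi_{xx} e1}, so no substantive differences to report.
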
  
With $(\ref{priori e})$,
it is easy to verify  the following Sobolev inequality 
\begin{equation}
~~~~~|h(x) |   \leq   \sqrt{2} \| h  \|^{\frac{1}{2}} \| h_{x} \|^{\frac{1}{2} }       ~~~~{\rm for}~~  h(x) \in H^{1}(\mathbb{R}_{+}).
\end{equation} 
\begin{lem}[ \cite{KNNZ out} ]
	\label{lem d2}
	For any function $\s(\cdot,t)\in H^{1}(\mb{R}_{+})$,  it holds
	\be   
	\delta	\int_{0}^{\infty} e^{-c_{0}x}|\s|^{2}dx\leq C\delta(|\s(0,t)|^{2}+\| \s_{x}(t) \|^{2}),	
	\label{d'1}
	\ee
	\be
	\int_{0}^{\infty}\f{\delta^{j}}{(1+\delta x)^{j}}|\s|^{2}dx\leq C\delta^{j-2}(|\s(0,t)|^{2}+\| \s_{x}(t) \|^{2}), \quad {\rm for}~ j>2,
	\label{d'2}
	\ee  
	where $\delta>0$, $c_{0}>0$, $C>0$ are positive constants.
\end{lem}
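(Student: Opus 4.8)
The plan is to reduce both weighted inequalities to a single pointwise bound on $\s$ in terms of its boundary value and the $L^{2}$-norm of $\s_{x}$, and then to integrate that bound against each weight explicitly. First I would use the fundamental theorem of calculus together with the boundary trace, writing for every $x\geq 0$
\[
\s(x,t)=\s(0,t)+\int_{0}^{x}\s_{y}(y,t)\,dy,
\]
and then applying Cauchy--Schwarz to the integral term to obtain
\[
|\s(x,t)|^{2}\leq 2|\s(0,t)|^{2}+2x\int_{0}^{x}|\s_{y}|^{2}\,dy\leq 2|\s(0,t)|^{2}+2x\,\|\s_{x}(t)\|^{2}.
\]
This one estimate drives everything; what remains is to multiply by each weight, integrate over $(0,\infty)$, and check that the resulting one-dimensional integrals are finite with the correct power of $\delta$.

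For the exponential-weight estimate $(\ref{d'1})$ I would multiply the pointwise bound by $e^{-c_{0}x}$ and integrate. The two scalar integrals $\int_{0}^{\infty}e^{-c_{0}x}\,dx=1/c_{0}$ and $\int_{0}^{\infty}x\,e^{-c_{0}x}\,dx=1/c_{0}^{2}$ are both finite, which gives
\[
\int_{0}^{\infty}e^{-c_{0}x}|\s|^{2}\,dx\leq C\big(|\s(0,t)|^{2}+\|\s_{x}(t)\|^{2}\big),
\]
and multiplying through by $\delta$ yields $(\ref{d'1})$. For the algebraic-weight estimate $(\ref{d'2})$ I would proceed identically with the weight $\delta^{j}/(1+\delta x)^{j}$ and evaluate the two integrals by the substitution $y=\delta x$, obtaining $\int_{0}^{\infty}\f{\delta^{j}}{(1+\delta x)^{j}}\,dx=\f{\delta^{j-1}}{j-1}$ and $\int_{0}^{\infty}\f{\delta^{j}x}{(1+\delta x)^{j}}\,dx=\f{\delta^{j-2}}{(j-1)(j-2)}$, so that
\[
\int_{0}^{\infty}\f{\delta^{j}}{(1+\delta x)^{j}}|\s|^{2}\,dx\leq C\delta^{j-1}|\s(0,t)|^{2}+C\delta^{j-2}\|\s_{x}(t)\|^{2}.
\]
Since $\delta$ is small, $\delta^{j-1}\leq \delta^{j-2}$, and combining the two terms produces the stated $\delta^{j-2}$ prefactor.

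The main point, and the only place the hypothesis $j>2$ genuinely enters, is the convergence of the first-moment integral $\int_{0}^{\infty}y(1+y)^{-j}\,dy$ coming from the $\|\s_{x}\|^{2}$ term: writing $y=(1+y)-1$ splits it into $\int_{0}^{\infty}(1+y)^{1-j}\,dy-\int_{0}^{\infty}(1+y)^{-j}\,dy=\f{1}{j-2}-\f{1}{j-1}=\f{1}{(j-1)(j-2)}$, which is finite precisely because $j>2$ makes the exponent $1-j<-1$. The exponential case $(\ref{d'1})$ needs no such restriction, since all moments of $e^{-c_{0}x}$ converge; the threshold $j>2$ is exactly what keeps the corresponding moment finite in the algebraic case. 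No further difficulty arises: both remaining obstacles are elementary integral evaluations, and the passage from the pointwise bound to the weighted integrals is linear.
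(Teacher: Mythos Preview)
Your proposal is correct and follows essentially the same approach as the paper: the authors' argument (present in the source, though ultimately suppressed since the lemma is cited from \cite{KNNZ out}) likewise starts from $\s(x,t)=\s(0,t)+\int_{0}^{x}\s_{y}\,dy$, applies Cauchy--Schwarz to obtain $|\s|\leq |\s(0,t)|+\sqrt{x}\,\|\s_{x}\|$, and then remarks that integrating this pointwise bound against the two weights immediately gives $(\ref{d'1})$ and $(\ref{d'2})$. Your write-up is simply more explicit about the moment computations and the role of the hypothesis $j>2$.
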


With the above Lemma, we can gain the $L^{2}$ estimates of $(\h,\s,\b{\h},\b{\s})$.
\begin{lem}
	\label{lem e0}
	Under the same conditions in Proposition $\ref{prop time decay}$, then the solution $(\h,\s,\b{\h},\b{\s})$ to the problem $(\ref{f1})$-$(\ref{boundary d1})$ satisfies  for $t \in[0,T]$
	\begin{equation}
	\begin{split}
	&
	\| (\h,\s,\b{\h},\b{\s}) \|^{2}+\int_{0}^{t}\|(\s_{x},\b{\s}_{x},\b{\s}-\s)  \|^{2}d\tau\leq C \| (\h_{0},\s_{0},\b{\h}_{0},\b{\s}_{0}) \|^{2}+C(\delta+\v)\int_{0}^{t}\| (\h_{x},\b{\h}_{x}) \|^{2}d\tau.
	\label{e0}
	\end{split}
	\end{equation}
\end{lem}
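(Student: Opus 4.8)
The plan is to prove $(\ref{e0})$ by a carefully weighted $L^{2}$ energy estimate for the perturbation system $(\ref{f1})$. I would multiply $(\ref{f1})_{1}$ by $\f{p{'}_{1}(\r)}{\r}\h$, $(\ref{f1})_{2}$ by $\r\s$, $(\ref{f1})_{3}$ by $\f{p{'}_{2}(n)}{n}\b{\h}$ and $(\ref{f1})_{4}$ by $n\b{\s}$, add the four resulting identities and integrate over $\mb{R}_{+}$. The weights $\f{p{'}_{1}(\r)}{\r}$ and $\f{p{'}_{2}(n)}{n}$ are chosen precisely so that the pressure contributions $p{'}_{1}(\r)\h_{x}\s$ and $p{'}_{2}(n)\b{\h}_{x}\b{\s}$ cancel in pairs after one integration by parts (the boundary terms vanishing since $\h(t,0)=\b{\h}(t,0)=0$), leaving only lower-order commutator terms such as $p{''}_{1}(\r)\r_{x}\h\s$. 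The viscous terms yield, after an integration by parts with no boundary contribution (because $\s(t,0)=\b{\s}(t,0)=0$), the good dissipative terms $\mu\|\s_{x}\|^{2}$ and $\int n\b{\s}_{x}^{2}\,dx$; and the two interphase drag terms, $-n(\b{\s}-\s)\s$ coming from $(\ref{f1})_{2}\times\r\s$ and $+n(\b{\s}-\s)\b{\s}$ coming from $(\ref{f1})_{4}\times n\b{\s}$, add up to the nonnegative quantity $\int n(\b{\s}-\s)^{2}\,dx$, which supplies the $\|\b{\s}-\s\|^{2}$ dissipation.

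Collecting the time-derivative contributions into $\f{d}{dt}\mc{E}(t)$ with
\be
\mc{E}(t)\sim\int_{0}^{\infty}\Big(\r\s^{2}+n\b{\s}^{2}+\f{p{'}_{1}(\r)}{\r}\h^{2}+\f{p{'}_{2}(n)}{n}\b{\h}^{2}\Big)\,dx,
\ee
which by $(\ref{priori e})$ and the uniform positivity of $\r$ and $n$ is equivalent to $\|(\h,\s,\b{\h},\b{\s})\|^{2}$, the procedure above produces a differential inequality of the form
\be
\f{d}{dt}\mc{E}(t)+c\big(\|\s_{x}\|^{2}+\|\b{\s}_{x}\|^{2}+\|\b{\s}-\s\|^{2}\big)\leq \sum(\text{error terms}),\quad c>0.
\ee
The errors fall into two groups. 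The first consists of terms carrying one derivative of the stationary profile, schematically $\int|\wt{u}_{x}|(|\h|^{2}+|\s|^{2})$, $\int|\wt{\r}_{x}|\,|\h|\,|\s|$, and the analogues for the second phase; by Theorem $\ref{thm stationary s}$ these weights are bounded by $C\delta e^{-mx}$ in the supersonic and subsonic cases and by $C\delta^{2}(1+\delta x)^{-2}$ in the sonic case. Since the inflow data force $\r(t,0)=\wt{\r}(0)=\r_{-}$ and $n(t,0)=\wt{n}(0)=n_{-}$, one has $\h(t,0)=\b{\h}(t,0)=0$ in addition to $\s(t,0)=\b{\s}(t,0)=0$, so Lemma $\ref{lem d2}$ applies to all four perturbations and converts this group into $C\delta\big(\|\h_{x}\|^{2}+\|\s_{x}\|^{2}+\|\b{\h}_{x}\|^{2}+\|\b{\s}_{x}\|^{2}\big)$. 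The second group consists of the genuinely nonlinear terms coming from $F_{1},F_{2}$ in $(\ref{F1})$--$(\ref{F2})$ and from the commutators; these are cubic in the perturbation, and by the Sobolev inequality together with $(\ref{priori e})$ are bounded by $C\v\big(\|\h_{x}\|^{2}+\|\s_{x}\|^{2}+\|\b{\h}_{x}\|^{2}+\|\b{\s}_{x}\|^{2}+\|\b{\s}-\s\|^{2}\big)$, where the second-order-derivative pieces hidden in $F_{1},F_{2}$ (involving $\wt{u}_{xx}$, $\wt{v}_{xx}$ or $(\wt{n}\b{\s}_{x})_{x}$) are first integrated by parts so that only first-order derivatives remain. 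I would then integrate in time over $[0,t]$, choose $\delta+\v$ small enough that the $\|\s_{x}\|^{2}$, $\|\b{\s}_{x}\|^{2}$ and $\|\b{\s}-\s\|^{2}$ parts of the errors are absorbed by the left-hand dissipation, and move the surviving $\|\h_{x}\|^{2}$, $\|\b{\h}_{x}\|^{2}$ parts to the right: no $\h_{x},\b{\h}_{x}$ dissipation is produced at this step, which is exactly why those terms appear on the right in $(\ref{e0})$ and are recovered only in the later higher-order estimates entering Proposition $\ref{prop time decay}$. Using $\mc{E}(t)\sim\|(\h,\s,\b{\h},\b{\s})(t)\|^{2}$ and $\mc{E}(0)\sim\|(\h_{0},\s_{0},\b{\h}_{0},\b{\s}_{0})\|^{2}$ then gives $(\ref{e0})$.

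The main obstacle is the bookkeeping of the stationary-profile error terms in the sonic case $M_{+}=1$: there $|\wt{u}_{x}|,|\wt{v}_{x}|,|\wt{\r}_{x}|,|\wt{n}_{x}|$ decay only algebraically like $\delta^{2}(1+\delta x)^{-2}$, i.e.\ precisely at the borderline exponent $j=2$ at which the weighted estimate of Lemma $\ref{lem d2}$ no longer produces a gain of $\delta$; such terms have to be handled more delicately, exploiting the monotonicity $\wt{u}_{x}\geq 0$, $\wt{v}_{x}\geq 0$ from Theorem $\ref{thm stationary s}$ and the structural hypothesis $|p{'}_{1}(\r_{+})-p{'}_{2}(n_{+})|\leq\sqrt{2}u_{+}\min\{\cdots\}$ of Theorem $\ref{thm long time behavior}$, which is what keeps $\mc{E}$ positive definite and the linear dissipation strong enough to close the inequality. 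A secondary point requiring care is that no error term may leave behind an uncontrolled $\|\s_{xx}\|$ or $\|\b{\s}_{xx}\|$, since these do not appear on the left-hand side of $(\ref{e0})$; the second-order terms inside $F_{1},F_{2}$ must therefore always be integrated by parts back onto $\s_{x},\b{\s}_{x},\h_{x},\b{\h}_{x}$.
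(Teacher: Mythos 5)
Your proposal is correct and follows essentially the same route as the paper: a weighted $L^{2}$ energy identity whose quadratic part is exactly the relative entropy $\mathcal{E}_{1}+\mathcal{E}_{2}$ the paper uses (the paper works with the exact potentials $\Phi_{i}=\rho\int_{\wt{\rho}}^{\rho}\frac{p_{i}(s)-p_{i}(\wt{\rho})}{s^{2}}ds$ rather than your quadratic weights $\frac{p_{i}'}{\rho}$, which is only a cosmetic difference at this order), with the drag terms combining into $\int n(\b{\s}-\s)^{2}dx$, the profile-weighted errors absorbed via Lemma \ref{lem d2} and smallness of $\delta+\v$ for $M_{+}\neq 1$, and the borderline sonic case handled exactly as you indicate, through $\wt{u}_{x},\wt{v}_{x}\geq 0$ and the sign condition on $|p_{1}'(\rho_{+})-p_{2}'(n_{+})|$ making the quadratic form in $R_{1}+R_{2}$ nonnegative. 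No gaps.
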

\begin{proof}
Define 
\be 
\begin{aligned}
&
 \Phi_{1}=\r\int_{\wt{\r}}^{\r}\f{p_{1}(s)-p_{1}(\wt{\r})}{s^{2}}ds,\quad 
     \mathcal{E}_{1}=\r(\f{\s^{2}}{2}+\Phi_{1}),
     \\
     &
 \Phi_{2}=n\int_{\wt{n}}^{n}\f{p_{2}(s)-p_{2}(\wt{n})}{s^{2}}ds,\quad
   \mathcal{E}_{2}=n(\f{\b{\s}^{2}}{2}+\Phi_{2}). 
 \end{aligned}
\nonumber
  \ee
Then, by $(\ref{f})$ and $(\ref{stationary f})$, we gain 
\be 
\begin{split}
&\quad (\mc{E}_{1}+\mc{E}_{2})_{t}+G_{x}+n(\s-\b{\s})^{2}+\mu \s_{x}^{2}+n\b{\s}^{2}_{x}+n(\b{\s}-\s)^{2}
+R_{1}+R_{2}=-R_{3},
\end{split}
\label{f_0}
\ee 
where
\be 
\begin{aligned}
&G:=[u\mc{E}_{1}+v\mc{E}_{2}+(p{'}_{1}(\r)-p{'}_{1}(\wt{\r}))\s+(p{'}_{2}(n)-p{'}_{2}(\wt{n}))\b{\s}]_{x}-[\mu \s\s_{x}+n\b{\s}\b{\s}_{x}+\b{\h}\b{\s}\wt{v}_{x}],
\\
&
R_{1}:= [\r\s^{2}+p_{1}(\r)-p_{1}(\wt{\r})-p{'}_{1}(\wt{\r}) \h]\wt{u}_{x} +[n\b{\s}^{2}+p_{2}(n)-p_{2}(\wt{n})-p{'}_{2}(\wt{n}) \b{\h}]\wt{v}_{x},
\\
& 
 R_{2}:=\h\s\f{\wt{\r}\wt{u}\wt{u}_{x}+(p_{1}(\wt{\r}))_{x}}{\wt{\r}}+\b{\h}\b{\s}\f{\wt{n}\wt{v}\wt{v}_{x}+(p_{2}(\wt{n}))_{x}}{\wt{n}},
 \\
 &
 R_{3}:= \b{\h}(\b{\s}-\s)(\wt{v}-\wt{u})+\b{\h}\b{\s}_{x}\wt{v}_{x}.
 \nonumber
\end{aligned}
\ee 
Integrating $(\ref{f_0})$ over $(0,\infty)$, we get 
\be 
\f{d}{dt}\int \mathcal{E}_{1}+\mathcal{E}_{2}dx+\int n(\s-\b{\s})^{2}+\mu \s_{x}^{2}+n\b{\s}^{2}_{x}+n(\b{\s}-\s)^{2}dx
+\int R_{1}dx+\int R_{2}dx=-\int R_{3}dx,
\label{f_}
\ee 
where we have used $(\ref{boundary d1}).$
With the help of  $(\ref{M_{+}>1 stationary solution d})$, $(\ref{priori e})$ and Sobolev embedding  inequality, we obtain
	\be 
	\int n(\s-\b{\s})^{2}+\mu \s_{x}^{2}+n\b{\s}\b{\s}^{2}dx \geq C \| (\s-\b{\s},\s,\b{\s}) \|^{2}-C(\v+\delta )\| (\s-\b{\s},\b{\s}) \|^{2},
	\label{l_2}
	\ee 
For the  case $M_{+}\neq 1$ in Theorem \ref{thm long time behavior}, with  $(\ref{M_{+}>1 stationary solution d})$  and Sobolev embedding  inequality, we have
\be 
\int_{0}^{\infty} |R_{1}+R_{2}+R_{3}| dx \leq C\delta \|(\h_{x},\b{\h}_{x},\s_{x},\b{\s}_{x},\b{\s}-\s) \|^{2}.
\label{case 1}
\ee 
 For the case $M_{+}=1$ in Theorem \ref{thm long time behavior}, 
with  $(\ref{sigma})$ and Sobolev embedding  inequality, we obtain
\be 
\begin{aligned}
\int R_{1}+R_{2} +R_{3}dx \geq&
\int (\r_{+}\s^{2}+\f{p{''}_{1}(\r_{+})}{2}\h^{2}+\f{u_{+}^{2}-p{'}_{1}(\r_{+})}{u_{+}}\h\s)\wt{u}_{x}+(n_{+}\b{\s}^{2}+\f{p{''}_{2}(n_{+})}{2}\b{\h}^{2}
\\
&
 +\f{u_{+}^{2}-p^{'}_{2}(n_{+})}{u_{+}}\b{\h}\b{\s}) \wt{v}_{x} dx-C(\v+\delta)\|(\h_{x},\s_{x},\b{\h}_{x},\b{\s}_{x},\b{\s}-\s) \|^{2}
\\
\geq&
 -C(\v+\delta)\|(\h_{x},\s_{x},\b{\h}_{x},\b{\s}_{x},\b{\s}-\s) \|^{2},
	\label{case 2}
\end{aligned}
\ee 
where we use 
 $|p{'}_{1}(\r_{+})-p{'}_{2}(n_{+})|<
\sqrt{2} |u_{+}|\min\{(1+\f{\r_{+}}{n_{+}})[ (\gm-1))p{'}_{1}(\r_{+}) ]^{\f{1}{2}},(1+\f{n_{+}}{\r_{+}})[(\a-1)p{'}_{2}(n_{+})]^{\f{1}{2}} \}$ and take $\delta$ and $\varepsilon$ small enough.\\	
Integrating  $(\ref{f_0})$ over $\mb{R}_{+}\times[0,t]$, substituting $(\ref{case 1})$ or $(\ref{case 2})$ into the resulted equation, we have 
\be 
\begin{aligned}	
	&
\| (\h,\s,\b{\h},\b{\s}) \|^{2}+\int_{0}^{t}\|(\s_{x},\b{\s}_{x},\b{\s}-\s)  \|^{2}d\tau
\\
\leq&
 C \| (\h_{0},\s_{0},\b{\h}_{0},\b{\s}_{0}) \|^{2}+C(\delta+\v)\int_{0}^{t}\| (\h_{x},\b{\h}_{x}) \|^{2}d\tau,
	\end{aligned}\ee  
where we take $\v$ and $\delta$ sufficiently small.
%
Hence, we  complete the proof of Lemma \ref{lem e0}.
\end{proof}

In order to complete the proof of Proposition $\ref{prop time decay}$, we need to obtain  estimates of  $(\h_{x}, \s_{x}, \b{\h}_{x},\b{\s}_{x})$.
\begin{lem}
	\label{lem e1}
Under the same conditions in Proposition $\ref{prop time decay}$, then the solution $(\h,\s,\b{\h},\b{\s})$ to the problem $(\ref{f1})$-$(\ref{boundary d1})$ satisfies for $t \in[0,T]$
		\begin{equation}
	\begin{split}
&\| (\h_{x},\b{\h}_{x})  \|^{2}+\int_{0}^{t} \| (\h_{x},\b{\h}_{x}) \|^{2}	\leq C\| (\h_{0},\s_{0},\b{\h}_{0},\b{\s}_{0},\h_{0x},\b{\h}_{0x}) \|^{2}+C\int_{0}^{t}(\v+\delta)\|  (\s_{xx},\b{\s}_{xx})\|^{2}.
\label{1-order time e1}
\end{split}
\end{equation}
\end{lem}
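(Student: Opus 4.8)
The plan is to derive a differential inequality for the quantity $\|(\h_x,\b{\h}_x)\|^2$ by differentiating the density equations $(\ref{f1})_1$ and $(\ref{f1})_3$ in $x$, testing against suitable multiples of $\h_x$ and $\b{\h}_x$, and absorbing the troublesome terms. Since $\h$ and $\b{\h}$ satisfy transport-type equations with no dissipation of their own, the dissipation for $\h_x$ and $\b{\h}_x$ cannot come from the density equations directly; instead, following the standard trick for viscous compressible systems (as in \cite{MN in}), I would introduce the modified quantity obtained by combining $\h_x$ with $\s$ (resp. $\b{\h}_x$ with $\b{\s}$) — concretely, test the momentum equation $(\ref{f1})_2$ with $-\h_x/\r$ (resp. $(\ref{f1})_4$ with $-\b{\h}_x/n$) so that the term $\f{p'_1(\r)}{\r^2}\h_x^2$ appears with a good sign, while the cross term $\s_t\h_x$ is rewritten via $(\ref{f1})_1$ to produce $\p_t(\s\h_x)$ plus a $\|\s_x\|^2$-type contribution and lower-order stationary-profile errors.

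The key steps, in order, would be: (1) differentiate $(\ref{f1})_1$ and $(\ref{f1})_3$ in $x$ to get equations for $\h_x$ and $\b{\h}_x$, keeping careful track of the profile terms $\wt{\r}_x,\wt{\r}_{xx},\wt{u}_x,\wt{u}_{xx}$ etc., which by $(\ref{M_{+}>1 stationary solution d})$ or $(\ref{sigma})$ are $O(\delta)$ (resp. $O(\delta^2/(1+\delta x)^2)$) and decay; (2) multiply the momentum equations by $-\h_x/\r$ and $-\b{\h}_x/n$, integrate over $\mb{R}_+$, and integrate by parts — the boundary terms vanish because $(\s,\b{\s})(t,0)=(0,0)$ from $(\ref{boundary d1})$, and this produces the desired $+\|(\h_x,\b{\h}_x)\|^2$ on the left (up to a positive constant and a factor like $p'_1(\r)/\r^2$ bounded below by the a priori assumption $(\ref{priori e})$); (3) handle the time-derivative cross terms by forming $\f{d}{dt}\int(-\s\h_x/\r - \b{\s}\b{\h}_x/n)\,dx$ plus remainders, where the remainders contain $\|\s_x\|^2,\|\b{\s}_x\|^2$ (already controlled, since $(u-\wt{u},v-\wt{v})_x\in L^2(H^1)$ is part of what Proposition \ref{prop time decay} asserts) and the second-order viscous terms $\s_{xx}\h_x/\r$, $\b{\s}_{xx}\b{\h}_x/n$, which are where the $(\v+\delta)\|(\s_{xx},\b{\s}_{xx})\|^2$ on the right-hand side of $(\ref{1-order time e1})$ comes from — these are split by Young's inequality so that the $\|(\h_x,\b{\h}_x)\|^2$ part is absorbed and only the small-coefficient $\|(\s_{xx},\b{\s}_{xx})\|^2$ remains; (4) estimate all nonlinear and profile-error terms using the Sobolev inequality $|h|\le\sqrt2\|h\|^{1/2}\|h_x\|^{1/2}$, the smallness $(\ref{priori e})$, and Lemma \ref{lem d2} to convert weighted integrals $\int \delta e^{-c_0 x}|\s|^2$ or $\int \f{\delta^j}{(1+\delta x)^j}|\s|^2$ into $C\delta(|\s(0,t)|^2+\|\s_x\|^2)=C\delta\|\s_x\|^2$; (5) integrate the resulting differential inequality in $t$ over $[0,t]$, combine with the zeroth-order estimate from Lemma \ref{lem e0} to control the integrated $\|(\s_x,\b{\s}_x)\|^2$ terms, and collect to obtain $(\ref{1-order time e1})$.

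I expect the main obstacle to be the careful bookkeeping of the profile-interaction terms in the sonic case $M_{+}=1$: there the stationary solution decays only algebraically, $|\p_x^k(\wt{\r}-\r_+,\dots)|\le C\delta^{k+1}/(1+\delta x)^{k+1}$, so terms like $\int \wt{u}_{xx}\h\h_x/\r\,dx$ or $\int \wt{\r}_x\s\h_x\,dx$ must be estimated with exactly the right power of $1+\delta x$ so that Lemma \ref{lem d2}'s inequality $(\ref{d'2})$ (valid for $j>2$) applies and the net coefficient is $O(\delta)$ rather than $O(1)$; getting a clean $C(\v+\delta)$ constant in front of $\|(\s_{xx},\b{\s}_{xx})\|^2$ (and not, say, $O(1)$) requires that every second-order viscous cross term be paired with a genuinely small factor. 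A secondary technical point is that, unlike the outflow case, the boundary is an inflow boundary with $(\s,\b{\s})(t,0)=0$ but $\h(t,0),\b{\h}(t,0)$ are \emph{not} prescribed, so one must verify that no uncontrolled boundary term in $\h$ or $\b{\h}$ survives the integration by parts — this is the reason for choosing the multipliers $-\h_x/\r$, $-\b{\h}_x/n$ (which kill the boundary contributions) rather than, e.g., $\h$ or $\b{\h}$ themselves.
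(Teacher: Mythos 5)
Your overall architecture is the same as the paper's (differentiate the continuity equations in $x$, pair them with the momentum equations tested against multiples of $\h_{x}$, $\b{\h}_{x}$, form the cross energies $\int\h_{x}\s$, $\int\b{\h}_{x}\b{\s}$, handle the profile terms with Lemma \ref{lem d2}, and close with Lemma \ref{lem e0}), but there are two concrete gaps. The most serious one is in your step (3): you attribute the small coefficient $(\v+\delta)$ in front of $\|(\s_{xx},\b{\s}_{xx})\|^{2}$ to Young's inequality applied to the cross terms $\s_{xx}\h_{x}/\r$ and $\b{\s}_{xx}\b{\h}_{x}/n$. That cannot work: these bilinear terms carry $O(1)$ coefficients, so Young's inequality yields $\eta\|\h_{x}\|^{2}+C_{\eta}\|\s_{xx}\|^{2}$ with $C_{\eta}=O(\eta^{-1})$, and an $O(1)$ constant in front of $\int_{0}^{t}\|(\s_{xx},\b{\s}_{xx})\|^{2}$ would destroy the closure of the whole a priori estimate, since Lemma $\ref{lem psi_{xx} e1}$ in turn bounds $\int_{0}^{t}\|(\s_{xx},\b{\s}_{xx})\|^{2}$ by $C\int_{0}^{t}\|(\h_{x},\b{\h}_{x})\|^{2}+\dots$; the scheme only closes because the coefficient here is small. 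The missing idea is an exact algebraic cancellation: the paper multiplies the differentiated continuity equation by $\mu\h_{x}$ and the momentum equation by $\wt{\r}^{2}\h_{x}$, so that the term $\mu\wt{\r}\h_{x}\s_{xx}$ produced by the first computation cancels identically against $-\mu\wt{\r}\h_{x}\s_{xx}$ from the second; what survives is only $-\mu\h\h_{x}\s_{xx}+\mu\wt{\r}^{2}(\f{1}{\r}-\f{1}{\wt{\r}})\h_{x}\s_{xx}$, which carries the genuinely small factor $\|\h\|_{L^{\infty}}\le C\v$, and only then does Young's inequality give $C\v\|(\h_{x},\s_{xx})\|^{2}$. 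With your multiplier $-\h_{x}/\r$ you would have to choose a matching variable-coefficient multiplier for the differentiated continuity equation to produce the same cancellation; ``suitable multiples of $\h_{x}$'' is precisely the point that cannot be left unspecified.

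The second gap is your claim that the boundary terms vanish because $(\s,\b{\s})(t,0)=(0,0)$. The flux term $\int(\mu u\f{\h_{x}^{2}}{2}+v\f{\b{\h}_{x}^{2}}{2}-\wt{\r}^{2}\h_{t}\s-\wt{n}\b{\h}_{t}\b{\s})_{x}\,dx$ evaluates to $-u_{-}\bigl(\mu\h_{x}^{2}(0,t)+\b{\h}_{x}^{2}(0,t)\bigr)/2$: the pieces containing $\s$, $\b{\s}$ do vanish, but the $\h_{x}^{2}(0,t)$, $\b{\h}_{x}^{2}(0,t)$ pieces survive (this is an inflow boundary, $u_{-}>0$) and reappear with a positive sign once moved to the right-hand side. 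They are controlled by evaluating the continuity equations at $x=0$ (using $\h(t,0)=\s(t,0)=0$) to obtain $u_{-}\h_{x}(0,t)+\r_{-}\s_{x}(0,t)=0$ and $u_{-}\b{\h}_{x}(0,t)+n_{-}\b{\s}_{x}(0,t)=0$, followed by the trace inequality $|\s_{x}(0,t)|^{2}\le 2\|\s_{x}\|\,\|\s_{xx}\|\le C_{\eta}\|\s_{x}\|^{2}+\eta\|\s_{xx}\|^{2}$; this is in fact a second, independent source of the $\|(\s_{xx},\b{\s}_{xx})\|^{2}$ contribution on the right of $(\ref{1-order time e1})$. Without this trace argument the estimate does not close.
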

\begin{proof}
Differentiating $(\ref{f1})_{1}$ in $x$, then multiplying the resulted equation by $\mu \h_{x}$, $(\ref{f1})_{2}$ by $\wt{\r}^{2}\h_{x}$ respectively, we gain
\begin{align}
& (\mu \f{\h_{x}^{2}}{2})_{t}+(\mu u\f{\h_{x}^{2}}{2})_{x}+\mu \wt{\r}\h_{x}\s_{xx}
=-\mu [\f{3}{2}\s_{x}\h_{x}^{2}+(\h\s_{xx}+\f{1}{2}\h_{x}\wt{u}_{x}+\s_{x}\wt{\r}_{x})\h_{x}+(\h\wt{u}_{x}+\s\wt{\r}_{x})_{x}\h_{x}],
\label{h_{x}}
\\
& (\wt{\r}^{2}\h_{x}\s)_{t}-(\wt{\r}^{2}\h_{t}\s)_{x}+2\wt{\r}\wt{\r}_{x}\h_{t}\s+\wt{\r}^{2}\h_{t}\s_{x}+\wt{\r}^{2}u\h_{x}\s_{x}+\wt{\r}^{2}\f{p^{'}_{1}(\r)}{\r}\h_{x}^{2}-\mu \wt{\r}\h_{x}\s_{xx}-\wt{\r}^{2}\f{n}{\r}(\b{\s}-\s)\h_{x}
\notag
\\
=&\mu\wt{\r}^{2}(\f{1}{\r}-\f{1}{\wt{\r}})\h_{x}\s_{xx}+F_{1}\wt{\r}^{2}\h_{x}.
\label{s_{x}}
\end{align}

Similarly, differentiating $(\ref{f1})_{3}$ in $x$, then multiplying the resultant equation by $\b{\h}_{x}$, $(\ref{f1})_{4}$ by $\wt{n}\b{\h}_{x}$ respectively lead to
\begin{align} &(\f{\b{\h}_{x}^{2}}{2})_{t}+(v\f{\b{\h}_{x}^{2}}{2})_{x}+\wt{n}\b{\h}_{x}\b{\s}_{xx}=-[\f{3}{2}\b{\s}_{x}\b{\h}_{x}^{2}+(\b{\h}\b{\s}_{xx}+\f{1}{2}\b{\h}_{x}\wt{v}_{x}+\b{\s}_{x}\wt{n}_{x})\b{\h}_{x}-(\b{\h}\wt{v}_{x}+\b{\s}\wt{n}_{x})_{x}\b{\h}_{x}],
\\
&
(\wt{n}\b{\h}_{x}\b{\s})_{t}-(\wt{n}\b{\h}_{t}\b{\s})_{x}+\wt{n}_{x}\b{\h}_{t}\b{\s}+\wt{n}\b{\h}_{t}\b{\s}_{x}+\wt{n}u\b{\h}_{x}\b{\s}_{x}+\wt{n}\f{p^{'}_{2}(n)}{n}\b{\h}_{x}^{2}-\wt{n}\b{\h}_{x}\b{\s}_{xx}+\wt{n}(\b{\s}-\s)\b{\h}_{x}
	\notag
\\
=&
\wt{n}\f{(\b{\h}\b{\s}_{x})_{x}}{n}\b{\h}_{x}+\wt{n}(\f{1}{n}-\f{1}{\wt{n}})(\wt{n}\b{\s}_{x})_{x}+\wt{n}_{x}\b{\h}_{x}\b{\s}_{xx}+F_{2}\wt{n}\b{\h}_{x}.
	\label{bs_{x}}
\end{align} 
 Adding $(\ref{h_{x}})$-$(\ref{bs_{x}})$ together  and integrating the resulted equation in $x$ over $[0,\infty)$ lead to
\be 
\begin{split} 
&\f{d}{dt}\int (\mu \f{\h_{x}^{2}}{2}+\f{\b{\h}_{x}^{2}}{2}+\wt{\r}^{2}\h_{t}\s +\wt{n}\b{\h}_{t}\b{\s})dx+\int(\mu u\f{\h_{x}^{2}}{2}+v\f{\b{\h}_{x}^{2}}{2}-\wt{\r}^{2}\h_{t}\s-\wt{n}\b{\h}_{t}\b{\s})_{x}dx\\&+\int (\wt{\r}^{2}\f{p^{'}_{1}(\r)}{\r}\h_{x}^{2}+\wt{n}\f{p^{'}_{2}(n)}{n}\b{\h}_{x}^{2})dx
=\sum_{i=1}^{4}I_{i},
\label{1f}
\end{split}
\ee 
where
\be 
\begin{split}
	&
I_{1}=-\int[ \wt{\r}^{2}(\h_{t}+u\h_{x})\s_{x}+\wt{n}(\b{\h}_{t}+v\b{\h}_{x})+2\wt{\r}\wt{\r}_{x}\h_{t}\s+\wt{n}_{x}\b{\h}_{t}\b{\s}]dx,\\&
I_{2}=\int -\mu \h\h_{x}\s_{xx}-\b{\h}\b{\h}_{x}\b{\s}_{xx}+ \mu\wt{\r}^{2}(\f{1}{\r}-\f{1}{\wt{\r}})\h_{x}\s_{xx}+\wt{n}(\f{1}{n}-\f{1}{\wt{n}})(\wt{n}\b{\s}_{x})_{x} +\wt{\r}^{2}\f{n}{\r}(\b{\s}-\s)\h_{x}-\wt{n}(\b{\s}-\s)\b{\h}_{x}dx,
\\&
I_{3}=-\int \mu \f{3}{2}\s_{x}\f{\h_{x}^{2}}{2}+\f{3}{2}\b{\s}_{x}\b{\h}_{x}^{2}dx+\int \wt{n}\f{(\b{\h}\b{\s}_{x})_{x}}{n}\b{\h}_{x}dx,\quad I_{4}=\int F_{1}\wt{\r}^{2}\h_{x}+F_{2}\wt{n}\b{\h}_{x}dx,\\&
I_{5}=-\int [\mu (
\f{1}{2}\h_{x}\wt{u}_{x}+\s_{x}\wt{\r}_{x})\h_{x}+\mu (\h\wt{u}_{x}+\s\wt{\r}_{x})_{x}\h_{x}
\\&\quad\quad
 -\wt{n}_{x}\b{\h}_{x}\b{\s}_{xx}+(
\f{1}{2}\b{\h}_{x}\wt{v}_{x}+\b{\s}_{x}\wt{n}_{x})\b{\h}_{x}-(\b{\h}\wt{v}_{x}+\b{\s}\wt{n}_{x})_{x}\b{\h}_{x}]dx.
\nonumber
\end{split}
\ee 
We estimate terms in the left side of $(\ref{1f})$. The second term in the left side is estimated as follows
\begin{equation}
\begin{split}
&\int (\mu u\f{\h_{x}^{2}}{2}+v\f{\b{\h}_{x}^{2}}{2}-\wt{\r}^{2}\h_{t}\s-\wt{n}\b{\h}_{t}\b{\s})_{x}dx=- u_{-}\f{\mu \h_{x}^{2}(0,t)+\b{\h}_{x}^{2}(0,t)}{2}
\\&~~~~~~~~~~~~~~~~~~~~~~~~~~~~~~~~~~~~~~~~~~~~~~~\quad  \leq C\|(\s_{x},\b{\s}_{x}) \|^{2}+\e \|(\s_{xx},\b{\s}_{xx})  \|^{2},
 \end{split}
 \label{h(0)}
 \end{equation}
 where we have used $u_{-}\h_{x}(0,t)+\r_{-}\s_{x}(0,t)=0$, $u_{-}\b{\h}_{x}(0,t)+n_{-}\b{\s}_{x}(0,t)=0$ and $(\ref{boundary d1})$.
 The third term is estimated as follows
 \be 
 \begin{split}
&\quad  \int \wt{\r}^{2}\f{p{'}_{1}(\r)}{\r}\h_{x}^{2}+\wt{n}\f{p{'}_{2}(n)}{n}\b{\h}_{x}^{2}dx\\&
  \geq \r_{+}p{'}_{1}(\r_{+})\| \h_{x} \|^{2}+p{'}_{2}(n_{+})\| \b{\h}_{x} \|^{2}-C(\| (\h,\b{\h}) \|_{L^{\infty}}+\delta)\| (\h_{x},\b{\h}_{x}) \|^{2}\\&
  \geq\r_{+}p{'}_{1}(\r_{+})\| \h_{x} \|^{2}+p{'}_{2}(n_{+})\| \b{\h}_{x} \|^{2}-C(\v+\delta)\| (\h_{x},\b{\h}_{x}) \|^{2}.
  \end{split}
 \ee 
Then, we turn to estimate terms in the right hand side of $(\ref{1f})$. With Cauchy-Schwartz inequality, Young inequality, $(\ref{f1})_{1}$, $(\ref{f1})_{3}$, we obtain 
 \begin{align}
 |I_{1}|
\leq &C \|(\s_{x},\b{\s}_{x})\|^{2}+C\delta\|(\h_{x},\b{\h}_{x})\|^{2},
\\
 |I_{2}|\leq &
 C\|(\h,\b{\h})\|_{L^{\infty}}\| (\h_{x},\b{\h}_{x},\s_{xx},\b{\s}_{xx}) \|^{2}+C_{\e}\|\b{\s}-\s \|^{2}+\eta\| (\h_{x},\b{\h}_{x}) \|^{2}+C\delta\| (\b{\h}_{x},\b{\s}_{x}) \|^{2}
 \notag
 \\
 \leq & 
 C(\v+\delta+\e)\| (\h_{x},\b{\h}_{x}) \|^{2}+C\v\| (\s_{xx},\b{\s}_{xx}) \|^{2}+C_{\e}\| \b{\s}-\s \|^{2}+C\delta\|\b{\s}_{x}  \|^{2},
\\
 |I_{3}|\leq &
 C\| (\s_{x},\b{\s}_{x}) \|_{L^{\infty}} \| (\h_{x},\b{\h}_{x}) \|^{2}+C\| \b{\h} \|_{L^{\infty}}\| (\b{\h}_{x},\b{\s}_{xx}) \|^{2}
 \notag
 \\
 \leq&
  C(\|(\s_{x},\b{\s}) \|+\|(\s_{xx},\b{\s}_{xx}) \|)\| (\h_{x},\b{\h}_{x}) \|^{2}+\v\| (\b{\h}_{x},\b{\s}_{xx}) \|^{2}
  \notag
  \\
 \leq &
 C\v \| (\s_{x},\b{\s}_{x},\h_{x},\b{\h}_{x}) \|^{2}+C\v\|( \s_{xx},\b{\s}_{xx}) \|^{2},
\\
|I_{4}+&I_{5}|\leq 
C\delta\| (\h_{x},\s_{x},\b{\h}_{x},\b{\s}_{x}) \|^{2} .
\label{I_{5}}
\end{align} 
Finally, the substitution of $(\ref{h(0)})$-$(\ref{I_{5}})$ into $(\ref{1f})$ leads to
\begin{equation}
\begin{split}
&
\f{d}{dt}\int(\h_{x}^{2}+\b{\h}_{x}^{2}+\wt{\r}^{2}\h_{x}\s +\wt{n}\b{\h}_{x}\b{\s})dx+\| (\h_{x},\b{\h}_{x})\|^{2}\leq C\| (\s_{x},\b{\s}_{x},\b{\s}-\s_{x}) \|^{2}+C\v\| (\s_{xx},\b{\s}_{xx}) \|^{2}
\label{e1}
\end{split}
\end{equation}
where we let $\delta$, $\v$ and $\e$ small enough.
 Integrating $(\ref{e1})$  
  over $[0,t]$, we obtain
\begin{equation}
\begin{split}
&\|(\h_{x},\b{\h}_{x}) \|^{2}+\int_{0}^{t}\| (\h_{x},\b{\h}_{x}) \|^{2}d\tau\leq C\|  (\h_{0},\b{\h}_{0},\s_{0},\b{\s}_{0},\h_{0x},\b{\h}_{0x}) \|^{2}+C\v\int_{0}^{t}\| (\s_{xx},\b{\s}_{xx}) \|^{2}
\label{high order time e1}	,
\end{split}
\end{equation}
where we have used $(\ref{e0})$ and  Cauchy-Schwarz inequality. Hence, we complete the proof of $(\ref{lem e1})$.
\end{proof}

\begin{lem}
\label{lem psi_{xx} e1}
Assume  that the same conditions in Proposition $\ref{prop time decay}$ hold, then the solution $(\h,\s,\b{\h},\b{\s})$ to the problem $(\ref{f1})$-$(\ref{boundary d1})$ satisfies  for $t \in[0,T]$
\begin{equation}
\begin{split}
&
~~~~  \|(\psi_{x},\b{\s}_{x} ) \|^{2} 	+\int_{0}^{t}\|( \psi_{xx},\b{\s}_{xx}) \|^{2} 	d\tau
\leq C\|(\h_{0},\s_{0},,\h_{0x},\s_{0x},\b{\h}_{0},\b{\s}_{0},\b{\h}_{0x},\b{\s}_{0x})\|^{2} . 
\end{split}
\end{equation}
\end{lem}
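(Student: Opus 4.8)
The plan is to obtain the dissipation of $\s_{xx}$ and $\b{\s}_{xx}$ by the classical device of testing the two ``parabolic'' equations $(\ref{f1})_{2}$ and $(\ref{f1})_{4}$ against $-\s_{xx}$ and $-\b{\s}_{xx}$, and then to close the resulting differential inequality against the lower--order estimates already obtained in Lemma~\ref{lem e0} and Lemma~\ref{lem e1}, exploiting the smallness of $\delta+\v$.

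First I would multiply $(\ref{f1})_{2}$ by $-\s_{xx}$, multiply $(\ref{f1})_{4}$ by $-\b{\s}_{xx}$, add, and integrate over $\mb{R}_{+}$. Since $\s(t,0)=\b{\s}(t,0)=0$ (by $(\ref{boundary d1})$) forces $\s_{t}(t,0)=\b{\s}_{t}(t,0)=0$, the time--derivative terms yield $\f{1}{2}\f{d}{dt}\|(\s_{x},\b{\s}_{x})\|^{2}$ with no boundary contribution; the convective terms $u\s_{x}\s_{xx}$, $v\b{\s}_{x}\b{\s}_{xx}$ produce the nonnegative boundary term $\f{1}{2}u_{-}(\s_{x}^{2}(0,t)+\b{\s}_{x}^{2}(0,t))$, which may be discarded, plus an $x$--integral of $u_{x}\s_{x}^{2}$ type; the viscous terms give $\int(\f{\mu}{\r}\s_{xx}^{2}+\b{\s}_{xx}^{2})\,dx\ge c\|(\s_{xx},\b{\s}_{xx})\|^{2}$, using that $\r,n$ stay bounded above and below by $(\ref{M_{+}>1 stationary solution d})$/$(\ref{sigma})$ and $(\ref{priori e})$, where one also expands $\f{(n\b{\s}_{x})_{x}}{n}=\b{\s}_{xx}+\f{n_{x}}{n}\b{\s}_{x}$ and keeps the commutator $\f{n_{x}}{n}\b{\s}_{x}$ as an error term. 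The outcome is a differential inequality
\[
\f{d}{dt}\|(\s_{x},\b{\s}_{x})\|^{2}+c\|(\s_{xx},\b{\s}_{xx})\|^{2}\le \mathrm{RHS},
\]
where $\mathrm{RHS}$ gathers the pressure terms $\f{p_{1}'(\r)}{\r}\h_{x}\s_{xx}$, $\f{p_{2}'(n)}{n}\b{\h}_{x}\b{\s}_{xx}$, the drag terms $\f{n(\b{\s}-\s)}{\r}\s_{xx}$, $(\b{\s}-\s)\b{\s}_{xx}$, the convective remainders, the commutator $\f{n_{x}}{n}\b{\s}_{x}\b{\s}_{xx}$, and the source terms $F_{1}\s_{xx}$, $F_{2}\b{\s}_{xx}$.

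Next I would estimate $\mathrm{RHS}$ term by term. By Young's inequality each term splits into an absorbable piece $\eta\|(\s_{xx},\b{\s}_{xx})\|^{2}$ plus a lower--order piece: the pressure terms leave $C_{\eta}\|(\h_{x},\b{\h}_{x})\|^{2}$; the drag terms leave $C_{\eta}\|\b{\s}-\s\|^{2}$; the convective remainders leave $C(\delta+\v)\|(\s_{x},\b{\s}_{x})\|^{2}$ (writing $u_{x}=\wt{u}_{x}+\s_{x}$ and using the Sobolev inequality together with $(\ref{priori e})$). For the commutator I would write $n_{x}=\wt{n}_{x}+\b{\h}_{x}$: the stationary part is $O(\delta)$ by $(\ref{M_{+}>1 stationary solution d})$/$(\ref{sigma})$, and the $\b{\h}_{x}$ part is bounded with $\|\b{\s}_{x}\|_{L^{\infty}}\le C\|\b{\s}_{x}\|^{1/2}\|\b{\s}_{xx}\|^{1/2}$ and $(\ref{priori e})$, giving $\eta\|\b{\s}_{xx}\|^{2}+C(\delta+\v)\|\b{\h}_{x}\|^{2}$. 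For $F_{1},F_{2}$ one uses $(\ref{F1})$, $(\ref{F2})$: every term carries a first or second $x$--derivative of the stationary solution, which is $O(\delta)$ and (exponentially or algebraically) decaying; multiplying such factors by $\h,\s,\b{\h},\b{\s}$ one applies Lemma~\ref{lem d2} together with $\h(t,0)=\s(t,0)=\b{\h}(t,0)=\b{\s}(t,0)=0$ to get a bound $C(\delta+\v)\|(\h_{x},\s_{x},\b{\h}_{x},\b{\s}_{x},\b{\s}-\s)\|^{2}$, while the only genuinely second--order piece $(\f{1}{n}-\f{1}{\wt{n}})(\wt{n}\b{\s}_{x})_{x}$ in $F_{2}$ contributes merely $C\v\|\b{\s}_{xx}\|^{2}$ since $|\f{1}{n}-\f{1}{\wt{n}}|\le C\|\b{\h}\|_{L^{\infty}}\le C\v$. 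Choosing $\eta$ small and summing yields
\[
\f{d}{dt}\|(\s_{x},\b{\s}_{x})\|^{2}+c\|(\s_{xx},\b{\s}_{xx})\|^{2}\le C\|(\h_{x},\b{\h}_{x},\s_{x},\b{\s}_{x},\b{\s}-\s)\|^{2}+C(\delta+\v)\|(\s_{xx},\b{\s}_{xx})\|^{2}.
\]

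Finally I would integrate over $[0,t]$, absorb the $C(\delta+\v)$--term into the left side (for $\delta+\v$ small), and then close the hierarchy: insert Lemma~\ref{lem e1} to control $\int_{0}^{t}\|(\h_{x},\b{\h}_{x})\|^{2}\,d\tau$ by $C\|(\h_{0},\s_{0},\b{\h}_{0},\b{\s}_{0})\|_{1}^{2}+C(\delta+\v)\int_{0}^{t}\|(\s_{xx},\b{\s}_{xx})\|^{2}\,d\tau$, and Lemma~\ref{lem e0} (fed again by Lemma~\ref{lem e1}) to control $\int_{0}^{t}\|(\s_{x},\b{\s}_{x},\b{\s}-\s)\|^{2}\,d\tau$ by the same type of right side; one more absorption of the resulting $C(\delta+\v)\int_{0}^{t}\|(\s_{xx},\b{\s}_{xx})\|^{2}\,d\tau$ term gives the asserted estimate. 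The main obstacle, as always for such coupled a priori estimates, is the bookkeeping: one must verify that \emph{every} reappearance of $\|(\s_{xx},\b{\s}_{xx})\|^{2}$ on the right--hand side---from Young's inequality, from the non--conservative viscosity commutator $\f{n_{x}}{n}\b{\s}_{x}$, from the second--order term in $F_{2}$, and from the feedback through Lemmas~\ref{lem e0}--\ref{lem e1}---is multiplied either by a free small parameter $\eta$ or by the structural small factor $\delta+\v$, so that the final absorption is legitimate and the three a priori inequalities close simultaneously under a single smallness condition on $\delta+\v$.
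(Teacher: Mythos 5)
Your proposal is correct and follows essentially the same route as the paper: testing $(\ref{f1})_{2}$ and $(\ref{f1})_{4}$ against $-\psi_{xx}$ and $-\bar{\psi}_{xx}$, bounding the pressure, drag, convective, commutator and source terms via Young's inequality, Sobolev embedding and Lemma~\ref{lem d2}, and then closing by feeding in Lemmas~\ref{lem e0} and~\ref{lem e1} and absorbing the $C(\delta+\varepsilon)\int_{0}^{t}\|(\psi_{xx},\bar{\psi}_{xx})\|^{2}d\tau$ remainders. The only cosmetic difference is that you integrate the convective term by parts to exhibit the sign of the boundary contribution, whereas the paper simply estimates $|K_{1}|$ directly by Cauchy--Schwarz; both are valid.
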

\begin{proof}
Multiplying $(\ref{f1})_{2}$ by $-\psi_{xx}$, $(\ref{f1})_{4}$ by $-\b{\s}_{xx}$ respectively,  then adding them togther and integrating the resulted equation in $x$ over $\mathbb{R}_{+}$ 
to gain
\begin{equation}
\begin{split}
&
\frac{d}{dt} \int \frac{\psi^{2}_{x}}{2}+\f{\b{\s}_{x}^{2}}{2}dx  		+ \int  \mu \frac{1}{\rho}	\psi^{2}_{xx}+\b{\s}_{xx}^{2}  dx  =-\sum^{3}_{i=1} K_{i},
\label{psi_{xx} space w}
\end{split}
\end{equation}
where
\begin{equation}
\begin{aligned}
K_{1}	=&
-\int [ 	u \psi_{x} \psi_{xx} 	+v \b{\s}_{x}\b{\s}_{xx}+\f{n}{\r}(\b{\s}-\s)\s_{xx}-\f{p{'}_{1}(\r)}{\r}\h_{x}\s_{xx}-\f{n}{\r}(\b{\s}-\s)\b{\s}_{xx}	-\f{p_{2}{'}(n)}{n}\b{\h}_{x}\b{\s}_{xx}]	dx,
\\
K_{2} 	=&
\int  [  -\widetilde{u}_{x} \psi \psi_{xx}  	+\mu \widetilde{u}_{xx} ( \frac{1}{\rho}-\f{1}{\wt{\r}})\s_{xx}-(\f{p{'}_{1}(\r)}{\r}-\f{p{'}_{1}(\wt{\r})}{\wt{\r}})\s_{xx}\wt{\r}_{x}+(\f{n}{\r}-\f{\wt{n}}{\wt{\r}})(\wt{v}-\wt{u})\s_{xx} 	
-\widetilde{v}_{x} \b{\s} \b{\s}_{xx}
\\
&
+
 (\wt{n}\wt{v}_{x})_{x} ( \frac{1}{n}-\f{1}{\wt{n}})\b{\s}_{xx}-(\f{p{'}_{2}(n)}{\r}-\f{p{'}_{2}(\wt{n})}{\wt{\r}})\b{\s}_{xx}\wt{n}_{x}+(\f{n}{\r}-\f{\wt{n}}{\wt{\r}})(\wt{v}-\wt{u})\s_{xx}+\f{\wt{n}_{x}}{\wt{n}}\b{\s}_{x}\b{\s}_{xx} +\f{(\b{\h}\wt{v}_{x})_{x}}{n}\b{\s}_{xx}]	dx,
\\
K_{3} =&
-\int  [\f{(\b{\h}\b{\s}_{x})_{x}}{n}\b{\s}_{xx}+(\f{1}{n}-\f{1}{\wt{n}})(\wt{n}\b{\s}_{x})_{x}\b{\s}_{xx}]	dx.
\nonumber
\end{aligned}
\end{equation}
First, we estimate  terms in the left side of $(\ref{psi_{xx} space w})$. 
With $\frac{1}{\rho}=(\frac{1}{\rho}-\frac{1}{\widetilde{\rho}})+(\frac{1}{\widetilde{\rho}}-\frac{1} {  \rho_{+} } )+\frac{1} {  \rho_{+} } $,
the second term  is estimated as follows:
\begin{equation}
\begin{aligned}
\int	\frac{\mu}{\rho} 	\psi_{xx}^{2}+\b{\s}_{xx}^{2}	dx\geq&
 \f{\mu}{\r_{+}}\| \s_{xx} \|^{2}+\| \b{\s}_{xx} \|^{2}-C(\|\h \|_{L^{\infty}}+\delta)\| \s_{xx} \|^{2}
 \\
 \geq&
  \f{\mu}{\r_{+}}\| \s_{xx} \|^{2}+\| \b{\s}_{xx} \|^{2}-C(\v+\delta)\| \s_{xx} \|^{2}.
\label{s_{xx} e}
\end{aligned}
\end{equation}
Then, we turn to estimate each term in the right side of $(\ref{psi_{xx} space w})$. Using $(\ref{M_{+}>1 stationary solution d})$, Sobolev inequality and Cauchy-Schwarz inequality,  we have
\begin{align}
| K_{1}|	\leq 	&
\frac{\mu }{ 16\rho_{+} }	\| \psi_{xx} \|^{2}	+\f{1}{16}\|\b{\s}_{xx} \|^{2}+C \| (\h_{x},\s_{x},\b{\h}_{x},\b{\s}_{x},\b{\s}-\s) \|^{2},
\label{K_{1} e }
\\
|K_{2}|	\leq& 	
C   \delta  \| ( \h_{x},\psi_{x},\b{\h}_{x},\b{\s}_{x})  \|^{2}+C\delta\|(\s_{xx} \b{\s}_{xx} )\|^{2},
\label{K_{2} e }
\\
|K_{3} |	\leq &
C 	\| \b{\h}\|_{L^{\infty}} \| (\b{\s}_{x},\b{\s}_{xx}) \|^{2}+C\| \b{\s}_{x} \|_{L^{\infty}} \| \b{\s}_{xx} \|~\| \b{\h}_{x} \|
\notag
\\
\leq&
 C\v \| (\b{\s}_{x},\b{\s}_{xx}) \|^{2}+C(\| \b{\s}_{x} \|+\|\b{\s}_{xx}\|)\|\b{\s}_{xx}\|~\| \b{\h}_{x}\|
 \notag
 \\
 \leq&
  C\v\| (\b{\s}_{x},\b{\s}_{xx}) \|^{2}.
\label{K_{3} e}
\end{align}
 With $\delta$ and $\varepsilon$ small enough and the substitution of $(\ref{s_{xx} e})$-$(\ref{K_{3} e})$ into $(\ref{psi_{xx} space w})$, we  obtain 
\begin{equation}
\begin{split}
&
~~~~\frac{d}{dt} 	\int	\psi_{x}^{2}+\b{\s}_{x}^{2} dx   	+	\ \frac{\mu}{2\rho_{+}}\|\psi_{xx}\|^{2} +\f{1}{2}	\| \b{\s}_{xx}\|^{2}
\leq 	C \| (\h_{x},\s_{x},\b{\h}_{x},\b{\s}_{x},\b{\s}-\s) \|^{2}	
\label{psi_{xx} e1}
\end{split}
\end{equation}
Integrating  $(\ref{psi_{xx} e1})$  in $\tau$ over $[0,t]$, we gain
\begin{equation}
\begin{split}
&
	\|(\psi_{x},\b{\s}_{x}  \|^{2}	+\int_{0}^{t}\| (\psi_{xx},\b{\s}_{xx}) \|^{2} 	d\tau
\leq   C\|(\h_{0}, \s_{0},\h_{0x}, \s_{0x}, \b{\h}_{0},\b{\s}_{0},\b{\h}_{0x}, \b{
\s}_{0x})\|^{2},
\end{split}
\end{equation}
where we use $(\ref{e0})$, $(\ref{1-order time e1})$ and the smallness of $\varepsilon$.
Then we obtain the desired estimate $(\ref{psi_{xx} e1})$ and complete the proof of Lemma $\ref{lem psi_{xx} e1}$.
\end{proof}

\noindent\textbf{Acknowledgments}

The research of the paper is supported by the National Natural Science Foundation of China (Nos. 11931010, 11871047 and 11671384),  by the key research project of Academy for Multidisciplinary Studies, Capital Normal University, and by the Capacity Building for Sci-Tech Innovation-Fundamental Scientific Research Funds (No. 007/20530290068).



\begin{thebibliography}{99}
\bibliographystyle{plain}
	\parskip=0pt
	\small
\bibitem{C}J. Caar, Applications of Center Manifold Theory, Springer-Verlag, 1981.
	\bibitem {HL} C.C. Hao and H.L. Li,  Well-posedness for a multidimensional viscous liquid-gas two-phase flow model, SIAM J. Math. Anal. 44 (2012), 1304-1332.
	\bibitem{HMS in}  F.M. Huang, A. Matsumura, X.D.  Shi,  Viscous shock wave and boundary layer solution to an inflow problem for compressible viscous gas. Comm. Math. Phys. 239 (2003), no. 1-2, 261-285. 
    \bibitem {IM} M. Ishii and T. Hibiki, Thermo-fluid Dynamics of Two-Phase Flow. New York: Springer-Verlag, 2006.
	\bibitem{KNNZ out}  S. Kawashima, T. Nakamura, S. Nishibata, P.C. Zhu,  Stationary waves to viscous heat-conductive gases in half-space: existence, stability and convergence rate. Math. Models Methods Appl. Sci. 20 (2010), no. 12, 2201-2235.	
	
	\bibitem{LWW} 	H.L. Li, T. Wang, Y. Wang, Wave phenomena to the three-dimensional fluid-particle model, preprint, 2020.

    \bibitem{MN in}  A. Matsumura, K. Nishihara, Large-time behaviors of solutions to an inflow problem in the half space for a one-dimensional system of compressible viscous gas. Comm. Math. Phys. 222 (2001), no. 3, 449-474.	
    
   \bibitem{MV} A. Mellet, A. Vasseur, Asymptotic analysis for a Vlasov-Fokker-Planck/compressible Navier- Stokes system of equations, Comm. Math. Phys. 281 (2008), no. 3, 573-596.	
   \bibitem{NN in}  T. Nakamura, S. Nishibata,  Stationary wave associated with an inflow problem in the half line for viscous heat-conductive gas. J. Hyperbolic Differ. Equ. 8 (2011), no. 4, 651-670. 
   \bibitem{WQ in}  X.H. Qin, Y. Wang, Stability of wave patterns to the inflow problem of full compressible Navier-Stokes equations. SIAM J. Math. Anal. 41 (2009), no. 5, 2057-2087.

\end{thebibliography}
\end{document}